\def\e0{{e_0}}
\def\d{{d}}
\def\ds1{{\mathds{1}}}
\def\calA{{\mathcal{A}}}
\def\calF{{\mathcal{F}}}
\def\calH{{\mathcal{H}}}
\def\calL{{\mathcal{L}}}
\def\Hsym{H_{\text{sym}}}
\def\adm{\calA(d,\e0,b,r_0)} 
\def\burg{\mathcal{B}}
\newcommand{\up}{\textup}
\let\phi\varphi
\def\dd{\;\!\mathrm{d}}
\newcommand{\N}{\mathbb{N}}
\newcommand{\R}{\mathbb{R}}
\newcommand{\Z}{\mathbb{Z}}
\numberwithin{equation}{section}
\DeclareMathOperator{\curl}{curl}
\DeclareMathOperator{\supp}{supp}
\newtheorem{Def}{Definition}[section]
\newtheorem{Thm}[Def]{Theorem}
\newtheorem{Lemma}[Def]{Lemma}
\newtheorem{Cor}[Def]{Corollary}
\newtheorem{prop}[Def]{Proposition}
\newtheorem{Rm}[Def]{Remark}
\title{A scaling law for a model of epitaxially strained elastic films with dislocations}
\author{L. Abel, J. Ginster and B. Zwicknagl\\
\it\normalsize
Humboldt-Universit\"at zu Berlin, Institut f\"ur Mathematik, Unter den Linden 6, 10099 Berlin
}
\date{}
\begin{document}
\maketitle
\thispagestyle{empty}
\abstract{A static variational model for shape formation in heteroepitaxial crystal growth is considered. The energy functional takes into account surface energy, elastic misfit-energy and nucleation energy of dislocations. A scaling law for the infimal energy is proven. The results quantify the expectation that in certain parameter regimes, island formation or topological defects are favorable. This generalizes results in the purely elastic setting  from \cite{GZ:2013}. To handle dislocations in the lower bound, a new variant of a ball-construction combined with thorough local estimates is presented.}

\section{Introduction}
We prove a scaling law for the infimal energy in a variational model for heteroepitaxial growth introduced in \cite{FFLM:2016}.
Our aim is to understand the topological properties of quantum dots in a static situation.
Let us briefly explain the physical situation. 
We consider a crystalline film on a rigid crystalline substrate. 
A misfit between the corresponding lattice parameters introduces an elastic strain in the film.
To release elastic energy in the film the model under investigation allows for the presence of dislocations.    
Then pattern formation is often explained as a result of a competition between the surface energy of the film, elastic energy, and the dislocation nucleation energy, see \eqref{eq:totalenergy}. 
Before discussing the model, its physical motivation, and related literature in detail (see Section \ref{sec:model}), let us state our main result. 
\subsection{Main result}
Given the surface tension $\gamma >0$, the volume $d>0$, the lattice misfit strength $\e0>0$, the Burgers vector $(b,0)$ with $b>0$, the dislocation core radius $r_0>0$, and a typical linear elastic energy density $W$ satisfying a quadratic growth condition of the form
\begin{align} \label{eq:Wgrowth}
	c_1 \vert \Hsym\vert^2 \leq W(H) \leq \frac{1}{c_1}\vert \Hsym\vert^2
\end{align} 
we consider the energy functional $\calF: \adm \rightarrow \R$ given by 
\begin{align}\label{eq:totalenergy}
	\calF(h,H,\sigma)&
 \coloneqq \gamma \int_0^1 \sqrt{1+\vert h'\vert^2} \dd \calL^1 + \int_{\Omega_h} W(H) \dd \calL^2 + kb^2.
\end{align}
The three terms of the energy are the surface energy, the elastic energy and the dislocation nucleation energy.
The set of admissible configurations consists of triples of film profiles, elastic strains, and dislocation measures
\begin{align}\label{eq:adm}
	\calA\coloneqq& \adm \coloneqq \Big\{(h,H,\sigma)\mid h:[0,1]\to[0,\infty) \text{\ Lipschitz}, \int_0^1 h\dd\calL^1=d, \ h(0)=h(1)=0; \nonumber\\
		&\sigma =(b,0)\sum_{i=1}^k\delta_{p_i}\text{ such that }k\in\N_0, \, B_{r_0}(p_i)\subseteq\Omega_h;\text{\ and\ }\nonumber\\
		&H\in L^2(\Omega_h;\R^2)\text{ such that }\curl H=\sigma \ast J_{r_0} \up{ in } \Omega_h \up{ and } H(1, 0) = (\e0,  0) \up{ on } \{y=0\} \Big\},
\end{align}
where 
\begin{eqnarray}\label{eq:Omegah}
  \Omega_h\coloneqq \left\{(x,y)\in \R^2 \mid 0<x<1, \,0<y<h(x)\right\},  
\end{eqnarray} $J_{r_0}$ is a mollifying kernel with support in $B_{r_0}(0)$ and the curl-operator acts row-wise.
In order to understand qualitative properties of the small energy configurations we study the scaling of the infimal energy with respect to the problem parameters. 
Our analysis indicates that in certain parameter regimes equidistantly distributed dislocations occur close to the interface in isolated islands or in flat films, see the discussion in Section \ref{heuristics}.
Precisely, we have the following result.
\begin{Thm}[\up{Scaling Law}] \label{thm: main_intro}
There is a constant $c_s>0$ with the following property: For all 
$\gamma, \e0,b,\d>0$ and $r_0\in (0,1]$ with  $b/\e0 \geq 64^4 r_0$ there holds
\begin{eqnarray*}
   \frac{1}{c_s} s(\gamma, e_0,b,d,r_0) 
  \leq \inf_{\adm} \calF(h,H,\sigma)
  \leq  c_s s(\gamma, e_0,b,d,r_0), 
\end{eqnarray*}
where $s(\gamma, e_0,b,d,r_0) =  \gamma(1 + \d) + \min\left\{\gamma^{2/3}\e0^{2/3} \d^{2/3}, \left[\gamma \e0b\d\left(1+\log\left(\frac{b}{\e0r_0}\right)\right)\right]^{1/2}\right\}$.
\end{Thm}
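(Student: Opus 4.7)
The proof consists of matching upper and lower bounds on $\inf_{\adm}\calF$. The target $s$ naturally splits into a ``surface floor'' $\gamma(1+\d)$ and a competition term $\min\{A,B\}$, with $A=\gamma^{2/3}\e0^{2/3}\d^{2/3}$ (the dislocation-free island regime already analysed in \cite{GZ:2013}) and $B=[\gamma\e0 b\d(1+\log(b/(\e0 r_0)))]^{1/2}$ (a regime in which an array of dislocations near the interface partially screens the misfit). I would obtain the upper bound by constructing several admissible competitors and selecting the cheapest, and the lower bound by dichotomising on the total dislocation count $k$ and on the size of $\int_{\Omega_h}W(H)\dd\calL^2$.

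\textbf{Upper bound.} I would exhibit three competitors and take the minimum. First, the flat block $\Omega_h=(0,1)\times(0,\d)$ with the constant misfit strain and no dislocations has total energy of order $\gamma(1+\d)+\e0^2\d$, which already realises $s$ whenever $\e0^2\d\lesssim\min\{A,B\}$. Second, on top of the flat block place a periodic family of small bumps as in \cite{GZ:2013}: each bump of width $w$ and height $h$ admits a quasi-harmonic strain with elastic cost $\lesssim\e0^2 w^2/h$ and surface cost $\lesssim\gamma h$, and optimising the number and geometry of the bumps under the volume constraint yields the $A$-contribution. Third, keep the flat block and place $k$ equidistant dislocations $p_i=(i/k,r_0)$ near the film--substrate interface, taking $H$ to be a superposition of explicit (cut-off) edge-dislocation strain fields plus a constant background restoring the boundary datum; classical computations then give an elastic bound $\lesssim(\e0-kb)^2\d + kb^2(1+\log(1/(k r_0)))$ with an additional core cost $kb^2$, and optimising in $k$ (together with a slight tuning of the local film height) produces the $B$-contribution.

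\textbf{Lower bound.} The surface floor is immediate: since $h(0)=h(1)=0$ and $\int_0^1 h\dd\calL^1=\d$, one has both $\int_0^1\sqrt{1+|h'|^2}\dd\calL^1\ge 1$ and $\int_0^1\sqrt{1+|h'|^2}\dd\calL^1\ge\int_0^1|h'|\dd\calL^1\ge 2\max h\ge 2\d$, so the surface energy alone dominates $\gamma(1+\d)$. For the remaining $\min\{A,B\}$ I would dichotomise: if $kb^2\ge B$, the core term alone suffices; otherwise $k$ is relatively small and the elastic energy must carry the estimate. In the ``few dislocations'' subcase one regards $\curl H=\sigma\ast J_{r_0}$ as a small perturbation and reproduces the argument of \cite{GZ:2013}: Korn- and trace-type estimates together with the boundary condition $H(1,0)=(\e0,0)$ on $\{y=0\}$, interpolated against the surface term, give the $A$-bound. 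In the ``many dislocations'' subcase one applies the new ball construction announced in the abstract: grow merging balls around the cores in $\Omega_h$, and on each resulting dyadic annulus use a local Cauchy--Schwarz / co-area estimate of the form $\int_{B_R\setminus B_r}|\Hsym|^2\dd\calL^2\gtrsim b^2\log(R/r)$, assembling a total lower bound of order $kb^2\log(\ell/r_0)$ with $\ell$ the effective screening length; combining this log-contribution with the un-screened misfit from the boundary datum and with the surface term via Cauchy--Schwarz yields the geometric mean $B$.

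\textbf{Main obstacle.} The hard part is the ball construction in the dislocation regime. Standard ball constructions bound $\int|H|^2\dd\calL^2$ below purely in terms of the topological charge contained in the balls, but two features of our setting force substantial adaptations. First, the inhomogeneous boundary datum $H(1,0)=(\e0,0)$ on $\{y=0\}$ generates a curl-free background that couples to the dislocations, so the local annular estimates must retain the misfit contribution rather than discard it. Second, $\Omega_h$ is an \emph{a priori} unknown Lipschitz set and the dislocations may cluster near the free surface, so the construction must be performed relative to $\partial\Omega_h$ with sharp control on the distance between each merged ball and the boundary, in order to obtain the correct logarithm $\log(b/(\e0 r_0))$ rather than $\log(1/r_0)$. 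Reconciling this local-in-$\Omega_h$ ball construction with the surface-energy interpolation in the mixed regime where the profile $h$ is rough and many dislocations are simultaneously present is, in my view, the most delicate step.
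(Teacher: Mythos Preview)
Your overall architecture is sound, but both halves have a genuine gap relative to the paper's argument.

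\textbf{Upper bound.} Your third competitor places dislocations in a flat film of fixed length $1$. Optimising in $k$ alone gives $k\sim\e0/b$ and an elastic-plus-core cost $\sim \e0 b\bigl(1+\log(b/(\e0 r_0))\bigr)$, which is \emph{not} bounded by $B=[\gamma\e0 b\d(1+\log(b/(\e0 r_0)))]^{1/2}$ in the regime $\e0 b(1+\log(b/(\e0 r_0)))\gg\gamma\d$. The paper's construction places the dislocations in an \emph{island of variable base length} $L\le 1$: the surface energy then scales like $\gamma\d/L$ while the dislocation energy scales like $L\e0 b(1+\log(b/(\e0 r_0)))$, and the optimal $L$ yields the geometric mean $B$. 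Your ``slight tuning of the local film height'' does not capture this; the length, not the height, is the free parameter that produces $B$.

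\textbf{Lower bound.} A global dichotomy on $k$ does not close. Once $kb^2<B$ you still need the energy to dominate $\min\{A,B\}$, and the split into ``few'' versus ``many'' dislocations has no workable threshold: the dislocations may cluster in part of $\Omega_h$ while the rest is tall and thin, so neither the perturbative GZ argument nor the ball construction applies globally. The paper instead introduces \emph{local} length scales $l_i=\min\{\ell_h,\ell_d\}$, where $\ell_h$ is geometric (largest square under the graph) and $\ell_d$ caps the local dislocation count at $\log^2(b/(\e0 r_0))$. This cap is essential: it is precisely what makes the ball-construction lemma applicable (the lemma requires $N\le\log^2(b/(\e0 r_0))$ to control merging) and what allows the Korn-with-curl inequality to act as a small perturbation on each segment. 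On each $[x_i,x_{i+1}]$ one then proves either a surface bound $\gamma\d_i/l_i$, an elastic bound $\e0^2 l_i^2\ge(\gamma\e0\d_i)^{2/3}$, or a log bound $\e0 b\, l_i\log(b/(\e0 r_0))$; summing and combining the last with the isoperimetric surface estimate $\gamma\d_{J_1}/L_{J_1}$ via AM--GM gives $B$, while subadditivity of $t\mapsto t^{2/3}$ gives $A$. Your identification of the ball construction and its interaction with $\partial\Omega_h$ as the crux is correct, but the mechanism that delivers $B$ is this local decomposition and interpolation, not a direct Cauchy--Schwarz between a global log-term and the surface energy.
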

\begin{proof}
We prove the upper bound in Section \ref{sec:upperbound} (see Cor. \ref{cor:ub}) and the lower bound in Section \ref{sec:lowerbound}.
\end{proof}
Theorem \ref{thm: main_intro} on the one hand-side complements the existence results of \cite{FFLM:2016} for energy functionals similar to \eqref{eq:totalenergy} by a scaling law for the infinitesimal energy, and on the other hand-side generalizes \cite[Theorem 3.2]{GZ:2013} for $p=2$  where the special case of $\gamma=1, \sigma=0$ and $k=0$ is treated. The main novelties here are an upper bound construction including dislocations and the proof of the lower bound.
Let us also briefly comment on the specific situation of flat films as discussed, for example, in \cite{Matthews,Haataja:2002}. 
There it is shown for a flat film of length $1$ that the presence of dislocations is favorable if $d \lesssim \frac{b}{\e0} \log(d/r_0)$. 
Using the upper bound constructions in this manuscript in the situation of flat films one can essentially validate this result, see Section \ref{sec: flat}.
However, we consider in this work not only flat films  which makes the situation significantly more complex.

\subsection{The model}\label{sec:model}
As indicated above, we study the model \eqref{eq:totalenergy} introduced in \cite{FFLM:2016}. The latter builds on a large body of literature on variational models that have proven useful to explain corrugations or island formation in epitaxially grown films as result of a competition between elastic and surface energy (see e.g. \cite{BonCha,FFLMor:2007,GZ:2013,FPZ:14,BGZ:2015}). Such models take the form \eqref{eq:totalenergy} on admissible configurations  in $\adm$ with $k=0$, i.e., $\sigma=0$. Besides existence and qualitative properties such as energy scaling laws, refined results have been obtained in particular in the static case (see e.g. \cite{FusMor,Bonac,Bonac:2015,DavPio:2019,DP:2020,CF:20} and the references therein), for related dynamical problems (see e.g. \cite{piovano:2014, FJM20, piovano-sapio:2023} and the references therein), microscopic justifications have been provided (see e.g. \cite{Kreutz-Piovano:21}), and the relations between nonlinear and linear elastic models have been studied (see e.g. \cite{FKZ:21}).\\

\noindent Let us briefly explain the model. The substrate is assumed to occupy the domain $(0,1)\times(-\infty,0)$, and the film the domain $\Omega_h$ (see \eqref{eq:Omegah}), where the profile function $h$ describes the film's free surface. \\

\noindent{\bf Surface energy.} The first term in \eqref{eq:totalenergy} then models the surface energy of the film's free surface, where $\gamma>0$ denotes a typical surface energy constant. Note that for $\gamma>0$, there is no configuration with vanishing surface energy. \\

\noindent{\bf Elastic energy. }The second term in \eqref{eq:totalenergy} models the elastic energy in the film. In this case, $H\in L^2_{\text{loc}} (\Omega_h;\R^{2\times 2})$ is the displacement field, and $W$ is a typical linearized elastic energy density, e.g.
\[W(H)=\mu|\Hsym|^2+\frac{\lambda}{2}(\text{tr\,}H)^2 \]
with the Lam\'{e} coefficients $\lambda,\mu$ fulfilling the ellipticity conditions $\mu > 0$ and $\mu + \lambda >0$.  
The crystallographic misfit between substrate and film is introduced via the Dirichlet boundary condition $H(1,0)=(\e0,0)$ on $\{y=0\}\cap\overline{\Omega_h}$ for admissible configurations. Note that this is well-defined since every admissible strain field has curl in $L^2(\Omega_h;\R^2)$ (see \eqref{eq:adm} and the discussion around \eqref{eq:HcurlL2} below) and therefore admit a tangential trace, c.f. \cite[Chapter IX., Part A, Theorem 2]{Lions3} or \cite[Chapter 4]{Boyer/Fabrie:2012}.
\\
Note that for $\e0>0$, there is no configuration with vanishing elastic energy. \\

\noindent

\noindent {\bf Dislocations and nucleation energy. }A competing method for strain relief that is observed in experiments, is the development of topological defects such as dislocations (see e.g. \cite{Tersoff-LeGoues:94,Gao-Nix:99,Haataja:2002}). This effect is modeled in \cite{FFLM:2016} in terms of the dislocation measure $\sigma=\mathcal{B}\sum_{i=1}^k\delta_{p_i}$ for finitely many dislocation centers $p_i\in \Omega_h$. Here, we take the lattice structure in the film as reference configuration, and follow the Volterra approach to view dislocations as topological defects (see e.g. \cite{Nabarro}).  For simplicity, we restrict ourselves to the case of only one Burgers vector $\burg=(b,0) \in \R^2$, and denote by  $r_0\in (0,1]$ the dislocation radius. In view of \cite{FFLM:2016}, the important assumption appears to be that the first component $b$ of the Burgers vector has the same sign as the crystallographic misfit parameter $\e0$. We take the second component of $\burg$ as $0$ for simplicity. Note that we restrict ourselves to the case $r_0\leq 1$ since otherwise there is no non-vanishing admissible dislocation measure as we assume that the balls $B_{r_0}(p_i)$ around the dislocation centers are completely contained in $\Omega_h$. \\
Since in the continuum theory dislocations correspond to singularities in the strain field, some regularization is required. We follow the convolution-based approach from \cite{FFLM:2016} and consider for the core radius $r_0>0$ a mollifier $J_{r_0}(x) = r_0^{-2} J_1(x/r_0)$, where $J_{1}\in C_c^\infty(B_{1}(0); [0,\infty))$ satisfies $\int_{\R^2}J_{1}\d\calL^2=1$. This results in the condition
\begin{eqnarray}\label{eq:HcurlL2}
    \curl H=\sigma\ast J_{r_0},
\end{eqnarray}
where the curl-operator is applied row-wise. In the analogous sense we will refer to $\operatorname{curl} H_1$ to be the $\operatorname{curl}$ of the first row of the matrix field $H$. Here and in the remainder of the text, we do not distinguish in notation between row and column vectors. 
The nucleation energy associated to $\sigma=\burg\sum_{i=1}^k \delta_{p_i}$ is then given by the third term in \eqref{eq:totalenergy}, more precisely, 
\begin{eqnarray}
 \label{eq:nucleationenergy}
 c_0kb^2.
\end{eqnarray}
Here, the parameter $c_0>0$ is a material constant. The nucleation energy should represent the
core energy of a dislocation. So heuristically, one could compute for a single dislocation at point $p$
the elastic energy (up to a Korn constant) via
\begin{align*}
    \int_{B_{r_0}(p)} \vert H\vert^2 \dd \calL^2 &\geq \int_0^{r_0} \int_{\partial B_t(p)} \vert H\cdot \tau \vert^2 \dd \calH^1 \dd \calL^1(t)\\
        &\geq \int_0^{r_0} \frac{1}{2\pi t} \left\vert \int_{\partial B_t(p)} H\cdot \tau \dd \calH^1 \right\vert^2 \dd \calL^1(t)\\
        &= \int_0^{r_0} \frac{1}{2\pi t} \left\vert \int_{B_t(p)} b(\delta_p \ast J_{r_0}) \dd \calL^2 \right\vert^2 \dd \calL^1(t)\\
        &=\int_0^{1} \frac{\vert b\vert^2}{2\pi t} \left\vert \int_{B_t(0)} J_{1} \dd \calL^2 \right\vert^2 \dd \calL^1(t)\\
        &=:\vert b\vert^2 C(J_1),
\end{align*}
where we denote by $\tau$ a tangent unit vector field.
\noindent Note that $C(J_1) \coloneqq \int_{0}^{1} \frac{1}{2\pi t} \left\vert \int_{B_t(0)} J_{1} \dd \calL^2 \right\vert^2 \dd \calL^1(t)$ exists as $0<\frac{1}{2\pi t}\left\vert \int_{B_t(0)} J_{1} \dd \calL^2 \right\vert^2 \leq c t^3.$ Thus the material constant $c_0$ can be treated as a constant of order one and is not a parameter. For simplicity of notation, we restrict ourselves to the case  $c_0 = 1$.
In the upper bound, we make the dependency on $c_0$ explicit, see Remark \ref{rem:ubc0}.\\

\subsection{Presence of Dislocations in Flat Films} \label{sec: flat}

In this section, we will consider the specific situation of a flat film of length $L$ and height $d/L$. 
As in this setting the surface energy is fixed, in the following we will compare only the elastic energies for the two constructions of the elastic strain $H$ that will be used in a slightly different form in the proof of the upper bound of Theorem \ref{thm: main_intro}.
First, let us consider the elastic strain $H = \nabla u$, where $u: (0,1) \times (0,d) \to \R^2$ is given by
\[
u(x,y) = \begin{cases} (\e0 x \frac{L - y}L,0) &\text{ if } y \leq L, \\ 
    (0,0) &\text{ else. }
\end{cases}
\]
It follows that $\int_{(0,L) \times (0,d/L)} W(H) \, d\mathcal{L}^2 \sim \min\{\e0^2 L^2, \e0^2 d\}$, corresponding to the cases $L \leq d/L$ and $L \geq d/L$, respectively. 

If dislocations are present they can be expected to occur at distance $b/\e0$ at the interface to compensate the elastic strain induced by the misfit.
Assuming that $L \leq b / \e0$, let us consider a configuration of $\sim L \frac{\e0}b$ equidistant dislocations with Burgers vector $(b,0)$ and distance $b/\e0$. 
A corresponding strain field $H: (0,L) \times (0,d/L) \to \R^{2 \times 2}$ can be constructed such that the elastic energy is essentially the sum of the self-energies of the different dislocations, i.e.,
$\int_{(0,L) \times (0,d/L)} W(H) \d \mathcal{L}^2 \sim L \frac{\e0}b \min\{ b^2 \log( d / (L r_0) ), b^2 \log ( b / (\e0 r_0)) \}$, yielding two different scaling regimes, corresponding to $d / L \leq b/\e0$ and $d / L \geq b / \e0$, respectively, c.f.~the proof of Proposition \ref{prop:ub}. 

Comparing the different elastic energies suggests that the presence of dislocations is energetically favorable in flat films iff it holds $\min\{ L, d/L \} \gtrsim \frac{b}{\e0} \log( b / (\e0 r_0) )$. The estimate for the height validates the findings from \cite{Matthews,Haataja:2002} (up to a refinement of order $\log ( \log (d/L))$).

\subsection{Heuristics of the proof}\label{heuristics}
Before giving the detailed proof, let us briefly explain the scaling law in Theorem \ref{thm: main_intro}. 

The term $\gamma(1+\d)$ in the scaling law simply follows from the surface energy (see \cite[Lemma 2.6]{GZ:2013}). 
Indeed, let $h$ be  any admissible profile and denote by $\overline{x}\in (0,1)$ a point where $h$ attains its maximum (this exists since $h$ is continuous). 
Then $h(\overline{x})\geq \d$ and consequently
\begin{eqnarray}\label{eq:surface}
  \gamma \int_0^1\sqrt{1+|h'|^2}\dd \calL^1\geq \frac{\gamma}{2}\int_0^1\left(1+|h'|\right)\dd \calL^1= \frac{\gamma}{2}
+\frac{\gamma}{2}\left(\int_0^{\overline{x}}|h'|\dd\calL^1+\int_{\overline{x}}^1|h'|\dd\calL^1\right)\geq \frac{\gamma}{2}+\gamma\d,\end{eqnarray}
and this estimate is sharp up to a constant for configurations as in Figure \ref{fig:profile}. The remaining term $\min\left\{\gamma^{2/3}\e0^{2/3} \d^{2/3}, \left[\gamma \e0b\d\left(1+\log\left(\frac{b}{\e0r_0}\right)\right)\right]^{1/2}\right\}$ in the scaling law reflects the competition of surface, elastic and dislocation nucleation energy in configurations as sketched in Figure \ref{fig:profile}. 
More precisely, if there are no dislocations, then an island of length $L\in (0,1]$ has height $\sim d/L$, and the elastic energy is estimated in terms of the trace norm of the Dirichlet boundary condition, which leads to an energy (c.f. also the argument in Section \ref{sec: flat} above)
\[
    \sim \gamma+ \frac{\gamma d}{L}+\e0^2 L^2
\]
and in particular a natural length scale $L\sim \min\{\e0^{-2/3}(\gamma d)^{1/3},1\}$. If $L=1$ then $\e0^2\leq \gamma d$, and hence in any case, the energy is estimated above  by 
\begin{eqnarray*}
    \lesssim \gamma(1+d)+\gamma^{2/3}\e0^{2/3} \d^{2/3}.
\end{eqnarray*}
Note that the case $L<1$ corresponds to the formation of isolated islands.\\
As mentioned already in Section \ref{sec: flat}, dislocations are expected to occur at distance $l=b/\e0$ at the interface to compensate the elastic strain induced by the misfit. A configuration as sketched in Figure \ref{fig:profile} then has roughly $L\e0/b$ dislocations, and its total energy is estimated by
\[\lesssim \gamma+ \frac{\gamma d}{L}+\frac{L\e0}{b}b^2\log\left(\frac{b}{\e0r_0}\right)+\frac{L\e0}{b}b^2,\]
where the log-term represents the self-energy of the dislocations. Optimizing in $L$ yields $$L\sim \min\{(\gamma d)^{1/2} [\e0 b (\log(b/(\e0r_0))+1)]^{-1/2}, 1\},$$
which leads to an upper bound for the energy of the form
\[ \lesssim \gamma(1+d)+\left[\gamma \e0b\d\left(1+\log\left(\frac{b}{\e0r_0}\right)\right)\right]^{1/2}.\]
To prove the lower bound, we introduce similarly to \cite{BGZ:2015} local length scales as sketched in Figure \ref{fig:squares}. The idea then is to use on each of the segments of length $\ell_i$ with local volume $d_i$ the following lower bounds of the energy:
 \begin{itemize}
     \item if $d_i\gg\ell_i^2$ then the surface energy can be bounded below by $\sim \ell_i+\frac{d_i}{\ell_i}$
     \item if the number of dislocations is greater than $\ell_i\e0/b$ then we bound the elastic energy below by $\sim \frac{\ell_i \e0}{b}b^2\log\left(\frac{b}{\e0 r_0}\right)$ 
     \item if the number of dislocations is smaller than $\ell_i\e0/b$ then the elastic energy behaves roughly as in the case without any dislocations, and we bound the elastic energy below by $\sim \e0^2\ell_i^2$.
 \end{itemize}
 Using interpolation estimates as sketched above and subadditivity in $d_i$, we would obtain the lower bound. \\
 Technical difficulties arise due to the fact that all estimates on the elastic energy use Korn-type inequalities. To overcome this, we use a version of a ball construction which is an established tool to prove lower bounds for the self-energy. This requires slightly different local length scales than one would expect.  

\begin{figure}[h]
        \centering
        \includegraphics[height=4cm]{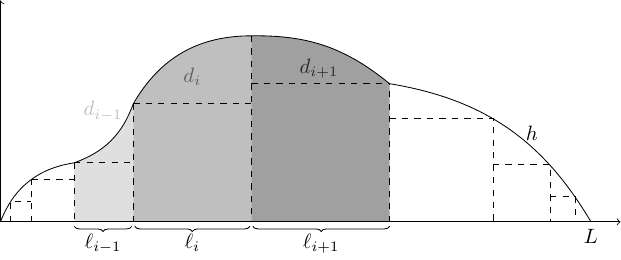}
        \caption{sketch of local length scales $\ell_i$ with (gray-shaded) local volumes $d_i$ }
\label{fig:squares}
\end{figure} 
\section{Upper Bound}\label{sec:upperbound}
In this section, we prove the following proposition which implies the upper bound in Theorem \ref{thm: main_intro}. Our new contribution is the proof of the first part. The second part follows from \cite[Theorem 3.1]{BGZ:2015}. 
\begin{prop}\label{prop:ub}
There is a constant $c_s>0$ with the following property: For all  $\e0,b, \gamma,\d,r_0>0$  the following two assertions hold:
\begin{itemize}
	\item[(i)] If $r_0\in (0,1]$ and $4r_0\e0\leq b$, then there is a configuration $(h,H,\sigma) \in \adm$ such that the total energy is bounded by 
\begin{align*}
	\calF(h,H,\sigma)\leq c_s\left(\gamma(1 + \d) + \left[\gamma \e0b\d\left(1+\log\left(\frac{b}{
 \e0r_0}\right)\right)\right]^{1/2}\right).
	\end{align*}
 \item[(ii)] There is a configuration $(h,H,\sigma) \in \adm$ such that the total energy is bounded by
\begin{align*}
	\calF(h,H,\sigma)\leq c_s\left(\gamma(1+\d)+\left(\e0 \gamma\d\right)^{2/3}\right).
	\end{align*}
\end{itemize}
\end{prop}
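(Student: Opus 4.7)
For part (i), the plan is to build an island of optimal length $L \in (0,1]$ carrying an equidistant row of edge dislocations near the interface, whose collective effect cancels the misfit strain $\e0$. First I would fix $L$ (to be optimized later) and take a trapezoidal profile $h$ supported essentially on $[0,L]$, with volume $d$ and maximal height of order $d/L$; by the computation in \eqref{eq:surface} its surface energy is bounded by $\gamma(1 + d/L)$ up to a universal constant. Next I would set $N \coloneqq \lfloor L\e0/b \rfloor$ and place dislocations at $p_i \coloneqq ((i-\tfrac{1}{2})b/\e0,\, 2r_0)$ for $i = 1, \ldots, N$. The hypothesis $4 r_0 \e0 \le b$ guarantees that the balls $B_{r_0}(p_i)$ are pairwise disjoint and, after moving the array slightly into the interior of the island, contained in $\Omega_h$.

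To construct the strain I would work cell by cell: on the vertical strip $C_i \coloneqq ((i-1)b/\e0,\, ib/\e0) \times \R$ take the canonical (mollified) edge-dislocation field $H^{(i)}$ with $\curl H^{(i)} = b \delta_{p_i} \ast J_{r_0}$ and exponential decay above the interface on the scale $b/\e0$. Away from the cores, the superposition $\sum_i H^{(i)}$ produces a roughly constant horizontal shear whose average over the interface $\{y=0\}\cap\overline{\Omega_h}$ equals $\e0$; adjusting by a smooth corrector supported away from the cores allows one to realize the Dirichlet condition $H(\cdot,0)=(\e0,0)$ exactly while vanishing above the strip $\{0 < y < C\, b/\e0\}$. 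A cell-by-cell computation in the spirit of \eqref{eq:nucleationenergy} then yields elastic energy $\lesssim b^2\log(b/(\e0 r_0))$ per dislocation, hence $\lesssim L\e0 b\log(b/(\e0 r_0))$ in total, while the nucleation energy is $Nb^2 \lesssim L\e0 b$.

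Combining the three contributions gives
\[ \calF(h,H,\sigma) \;\lesssim\; \gamma\Bigl(1 + \tfrac{d}{L}\Bigr) + L\e0 b\Bigl(1 + \log\!\bigl(\tfrac{b}{\e0 r_0}\bigr)\Bigr). \]
Balancing the two $L$-dependent terms by the choice $L \coloneqq \min\bigl\{1,\, \bigl(\gamma d/(\e0 b(1+\log(b/(\e0 r_0))))\bigr)^{1/2}\bigr\}$ yields exactly the bound claimed in (i); in the corner case $L=1$ the elastic term is controlled by $\gamma d$ and thus absorbed into $\gamma(1+d)$, while in the opposite corner $N=0$ the statement is trivially implied by part (ii). Part (ii) itself requires no new construction: it follows from \cite[Theorem 3.1]{BGZ:2015} applied in the present notation with $p=2$, $k=0$, $\sigma=0$.

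The main obstacle is the construction of $H$ in a way that simultaneously respects the mollified curl equation, the tangential-trace condition on $\{y=0\}$, and the free upper boundary of $\Omega_h$, all while keeping the elastic energy at the advertised order. In particular, the cross-terms between the canonical fields $H^{(i)}$, together with the smooth corrector and the truncation near the lateral and upper boundaries of $\Omega_h$, must be shown to contribute only at order $L\e0 b$ rather than $L\e0 b\log(b/(\e0 r_0))$; this is what forces the relevant logarithm to be $\log(b/(\e0 r_0))$ (the cell size of the array over the core radius) instead of the much larger $\log(L/r_0)$.
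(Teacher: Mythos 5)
Your overall strategy for (i) coincides with the paper's: a trapezoidal island of length $L$, about $\lfloor L\e0/b\rfloor$ equidistant dislocations at height $\sim r_0$ with spacing $b/\e0$, elastic cost $\lesssim b^2(1+\log(b/(\e0 r_0)))$ per dislocation, then optimization in $L$; part (ii) is likewise deferred to the known purely elastic construction. However, there are two genuine gaps. First, admissibility requires $B_{r_0}(p_i)\subseteq\Omega_h$, hence the island must be at least of height $\sim r_0$, i.e.\ $L\lesssim \d/r_0$. Your choice $L=\min\{1,(\gamma \d/(\e0 b(1+\log(b/(\e0 r_0)))))^{1/2}\}$ does not enforce this: for $\gamma$ large and $\d\ll r_0$ the island of height $\approx \d/L$ cannot contain any dislocation ball, so the proposed $(h,H,\sigma)$ is not in $\calA$. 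The paper imposes the extra cap $L\le \d/(4r_0)$ and handles the case $L=\d/(4r_0)$ separately in the optimization, absorbing the resulting term $\gamma r_0$ via $r_0\le 1$; some such case distinction is needed in your argument as well (similarly, your ``$N=0$ corner follows from (ii)'' step requires the small verification that $(\e0\gamma\d)^{2/3}\lesssim \gamma\d+[\gamma\e0 b\d(1+\log(b/(\e0 r_0)))]^{1/2}$ in that regime, which is true but not automatic).

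Second, and more importantly, the construction of $H$ is only asserted, and as stated it would not work: a single (mollified) edge-dislocation field does not decay exponentially above the interface — it decays only like $b/\mathrm{dist}$ — so there is no ``canonical'' cell field with decay on scale $b/\e0$ to superpose. The decay on the scale of the spacing $b/\e0$ is exactly the screening effect of the full array together with the misfit, and proving it simultaneously with the exact identity $\curl H=\sigma\ast J_{r_0}$, the trace condition on $\{y=0\}$, and the per-cell bound $\lesssim b^2(1+\log(b/(\e0 r_0)))$ is the heart of the proof; your proposal names this as ``the main obstacle'' but does not resolve it, and the unspecified ``smooth corrector'' would have to be shown not to create extra curl and to cost only $O(L\e0 b)$. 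The paper avoids superposition and correctors altogether: it writes down an explicit $(b/\e0,0)$-periodic displacement, piecewise gradient with jumps of size $b$ on vertical segments ending at the cores, whose gradient is supported in a strip of height $\sim b/\e0$ and equals the misfit strain near $\{y=0\}$, and then mollifies; the pointwise bound $|\hat H|\lesssim \e0+b/\mathrm{dist}(\cdot,\{\text{cores}\})$ (with the indicated truncations in $y$) immediately gives both the boundary behavior and the logarithmic energy. Without such an explicit field, or an equally quantitative substitute, the claimed estimate in (i) is not established.
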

\begin{Rm}\label{rem:ubc0}
If we replace the nucleation energy term (last term) in \eqref{eq:totalenergy} by the term \eqref{eq:nucleationenergy} involving the material parameter $c_0$, which we expect to be of order one, we can also make the dependences in this parameter explicit. Carefully checking the proof, we find that there is a constant $c_s$ independent of $c_0$ such that (ii) holds, and under the assumptions of (i) there is an admissible  configuration with 
\[
    \calF(h,H,\sigma)\leq c_s\left(\gamma(1 +\d) + \left[\gamma \e0b\d\left(1+ c_0 + \log\left(\frac{b}{\e0r_0}\right)\right)\right]^{1/2}\right).\]

\end{Rm}
   \begin{proof}
We start with (i).
Let $\e0,b, \gamma,\d,r_0>0$ be such that $r_0\in (0,1]$ and $4r_0\e0\leq b$. We explicitly construct an admissible triple $(h,H,\sigma)\in\adm$.
\\

{\it Step 1: Geometry and surface energy. }
 We use the same profile function as in \cite[Theorem 3.1]{BGZ:2015}. Let $0<L\leq \min\{1,d/(4r_0)\}$, to be fixed later (see \eqref{eq:choiceLub}). Set $\delta:=\frac{1}{16}\min\{r_0,L\}$ and $\overline{h}\coloneqq \frac{\d}{L-\delta}$, and consider (see Figure \ref{fig:profile}) $h:[0,1]\to[0,\infty)$, 

\begin{equation}\label{eq:ubchoiceh}
        h(x) \coloneqq \begin{cases}
                        \frac{\overline{h}}{\delta}x &\text{if } 0\leq x \leq \delta, \\
                        \overline{h} &\text{if }\delta \leq x \leq L-\delta, \\
                        -\frac{\overline{h}}{\delta}x + \frac{\overline{h}}{\delta}L &\text{if } L-\delta\leq x \leq L, \\
                        0 &\text{if } L \leq x\leq 1. 
                    \end{cases}
\end{equation}

\begin{figure}[t]
        \centering
        \includegraphics[height=5cm]{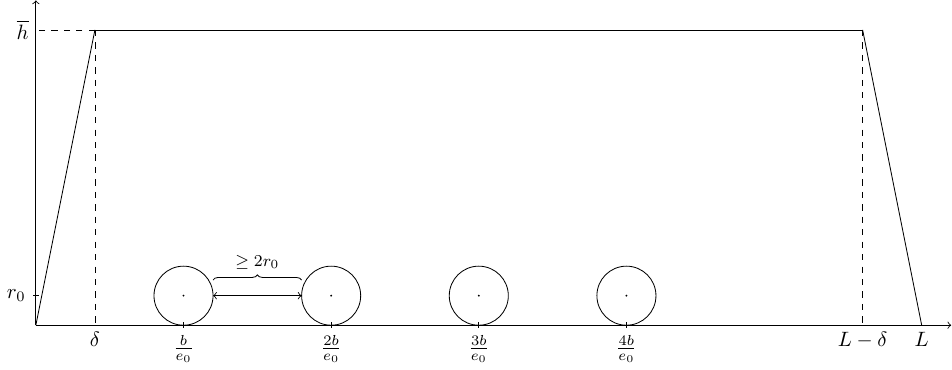}
        \caption{profile $h$ with indicated dislocations for the upper bound of the scaling law (not to scale) }
\label{fig:profile}
\end{figure}
Then $h$ is an admissible film profile, i.e., $h: [0,1] \to [0,\infty)$ is Lipschitz, $h(0) = h(1) = 0$ and $\int_0^1 h \, d \mathcal{L}^1 = d$. Moreover, the surface energy can be estimated by
\begin{align}\label{eq:ubsurf}
   \gamma  \int_0^1 \sqrt{1+\vert h'\vert^2}\dd \calL^1 \leq \gamma\int_0^1\left(1+|h'|\right)\,\dd\calL^1\leq \gamma(1+2\overline{h})\leq \gamma+\frac{4\gamma \d}{L}.
\end{align}

 {\it Step 2: Dislocations and nucleation energy. }
We set $k\coloneqq \lfloor \frac{L\e0}{b}\rfloor\in\N_0$ and consider $\max\{k-1,0\}$ equidistant dislocations, see Figure \ref{fig:profile}, i.e., we set (recall that $\burg=(b,0)$)
\[\sigma:=\burg\sum_{i=1}^{k-1}\delta_{(z_i,r_0)}\text{\qquad with\quad}z_i:=\frac{ib}{\e0},\quad  i=1,\dots, \max\{0,k-1\}. \]
 Then we have $B_{r_0}((z_i,r_0))\subseteq \Omega_h$ for all $i=1,\dots, k-1$. Indeed,  $B_{r_0}((ib/\e0,r_0))\subseteq (i b/\e0-r_0,i b/\e0+r_0)\times(0,2r_0)\subseteq\Omega_h$ since $b/\e0\geq 4r_0$, $b/\e0-r_0>3r_0>\delta$, $(k-1) b / \e0 + r_0 \leq L - b/\e0 + r_0 \leq L - \delta$ and $\overline{h}\geq d/L\geq 4r_0$. Thus, $\sigma$ is an admissible dislocation measure, and the nucleation energy is estimated by 
\begin{eqnarray}
\label{eq:ubnucl}
        c_0 (k-1) b^2 \leq c_0 \e0 b L .
\end{eqnarray}
{\it Step 3: Construction of the strain field. }It remains to construct a strain field $H$. 
We consider first the main part of the domain, i.e., $((0,(k-1)b/\e0]\times\R)\cap\Omega_h$. This is the part in which dislocations occur. We use a periodic construction and
set \begin{eqnarray*} 
A&\coloneqq& \left\{ (x,y)\in \R^2 \mid x\in (0,b/\e0], y\in (-\infty,r_0\e0x/b]\right\},\\
B&\coloneqq& \left\{ (x,y)\in \R^2 \mid x\in (0,b/\e0], y\in (r_0\e0x/b, (r_0\e0/b-1)x+b/\e0)\right\},\text{\quad and}\\
C&\coloneqq& ((0,b/\e0]\times\R)\setminus (A\cup B).
\end{eqnarray*}
We define the function $\hat{M}\in L^1_{\text{loc}}(\R^2;\R^{2\times 2})$ to be $(b/\e0,0)$-periodic, i.e., $\hat{M}(x+b/\e0,y)=\hat{M}(x,y)$ for all $(x,y)\in \R^2$, and such that its restriction to $(0,b/\e0) \times \R$ is given as 
the gradient field $\hat{M}|_{(0,b/\e0) \times \R}:=\nabla u$ with $u:(0,b/\e0) \times \R\to\R^2$ given by (see Figure \ref{fig:construction_detail})
\[
        u(x,y) \coloneqq
        \begin{cases}
                (\e0x,0) & \text{for } (x,y) \in A, \\
                \left( \frac{\e0x\left[\left(\frac{r_0\e0}{b}-1\right)x+\frac{b}{\e0}-y\right]}{\left(\frac{b}{\e0}-x\right)},0\right) & \text{for } (x,y) \in B, \\
                (0,0) & \text{for } (x,y) \in C. 
        \end{cases}
\]
For upcoming estimates note that in $B$ it holds
\begin{align} \label{eq: formula M}
\hat{M}_{11} &= \frac{\left[2\e0x\left(\frac{r_0\e0}{b}-1\right)+b-\e0y\right]\left(\frac{b}{\e0}-x\right)+\e0x^2\left(\frac{r_0\e0}{b}-1\right)+bx-\e0xy}{\left(\frac{b}{\e0}-x\right)^2} 
\end{align}
and 
\begin{align} \label{eq: formula M2}
\hat{M}_{12} &= -\frac{\e0 x}{b/ \e0 - x}.
\end{align}

\begin{figure}[h]
        \centering
        \includegraphics[scale=0.35]{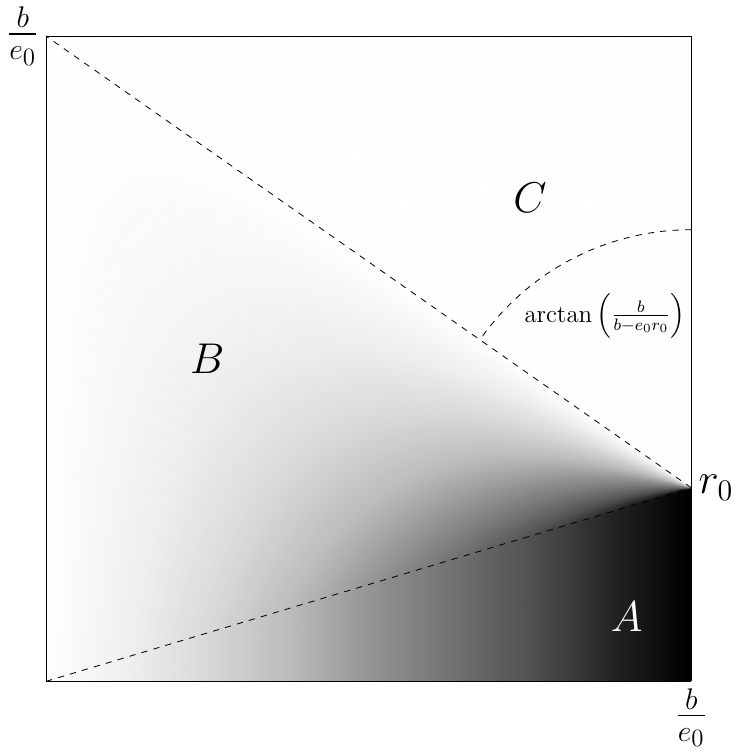}
        \caption{detail of the construction of the displacement $u_1$, the greyscale represents $u_1(x,y)$, white represents $u_1(x,y)=0$, black represents $u_1(x,y)=b$.}
        \label{fig:construction_detail}
\end{figure}
Note that $u$ satisfies $\lim_{x\to 0}u(x,y)=(0,0)$ for all $y\in\R$, and 
\[u(b/\e0,y)=\begin{cases}
    (b,0)&\text{\quad if\ }y\leq r_0\\
    (0,0)&\text{\quad if\ }y>r_0,
\end{cases} \]
and its $(b/\e0,0)$-periodic extension is locally in $SBV$ with $D^J u = \sum_{i \in \Z} (b,0) \otimes e_1 \, \mathcal{H}^1_{|\{i b/\e0\} \times (-\infty,r_0)}$. 
 In the most right part of the film (where no dislocations occur), we use a slightly different construction. We define $\hat{N}:((k-1)b/\e0,\infty)\times\R\to\R^{2\times 2}$ as $\hat{N}(x,y):=\nabla v(x,y)$ with 
 \[v(x,y):=\begin{cases}
      \left( \e0(x-(k-1)b/\e0) ,0\right) &\text{\ if\ }y\leq 0,\\
     (\e0\left(x-(k-1)b/\e0-y\right),0)& \text{\ if \ }y\in(0, x-(k-1)b/\e0),\\
     (0,0)& \text{\ otherwise.}
 \end{cases}\]
 Note that it also holds $\lim_{x \searrow (k-1) b / \e0} v(x,y) = (0,0)$.
Finally set $\hat{H}\in L^2_{\text{loc}}(\R^2;\R^{2\times 2})$,
\begin{eqnarray*}
\hat{H}(x,y):=\begin{cases}
    \hat{M}(x,y)&\text{\ if\ }x\leq (k-1)b/\e0,\\
    \hat{N}(x,y)&\text{\ if\ }x>(k-1)b/\e0.
\end{cases}
\end{eqnarray*}
Then $\operatorname{curl} \hat{H} = \sigma$.
Now, set $H\coloneqq (\hat{H}\ast J_{r_0})\mid_{\Omega_h}$. Then 
$(h,H,\sigma) \in \adm$ is an admissible configuration.\\

{\it Step 4: Estimates for $H$.}
We first claim that it holds for $(x,y) \in \R^2$ with $x \leq L + b / \e0$
\begin{equation} \label{eq: est hat H}
|\hat{H}(x,y)| \leq C \left( \frac{b}{\operatorname{dist}((x,y), \{ (i b/ \e0, r_0): i \in \Z \})} \ds1_{\{ y \leq b / \e0\}} + \e0 \ds1_{\{y \leq 3 b / \e0 \}}  \right).
\end{equation}
First, note that $L + b / \e0 - (k-1) b / \e0 = L - (k-2) b / \e0 \leq 3 b/\e0$.
Hence, \eqref{eq: est hat H} holds by the definition of $\hat{N}$ if $x > (k-1) b/ \e0$, i.e., $\hat{H}(x,y) = \hat{N}(x,y)$.
Hence, it remains to show the above estimate when $\hat{H}(x,y) = \hat{M}(x,y)$. 
By periodicity of $\hat{M}$ we may assume that $(x,y) \in (0,b/\e0] \times \R$.
Then note that for $(x,y) \in A$ that $|\hat{H}(x,y)| \leq \e0$ whereas for $(x,y) \in C$ it holds $|\hat{H}(x,y)| = 0$. 
This implies the validity of \eqref{eq: est hat H} in $A \cup C$. 
Eventually, let $(x,y) \in B$.
Then we estimate using \eqref{eq: formula M}
\begin{align*}
    |\hat{H}_{11}(x,y)| \leq &\frac{ \left| 2 \e0x  \frac{r_0 \e0}b \right| + \e0 \left| 2 x \right| + b + \e0 \left|  y \right|}{| b/\e0 - x|} + \frac{\e0x \left( \frac{r_0 \e0}{b} x  - x +  b / \e0 - y \right)}{|b/ \e0 - x |^2} \leq  \frac{7b}{|b/\e0 - x|},
\end{align*}
where we used that in $B$ it holds $ \frac{r_0 \e0}b x \leq y \leq \frac{r_0 \e0}b x - x + b/\e0$ and $0 \leq x \leq b/\e0$.
Similarly, we obtain from \eqref{eq: formula M2} that $|\hat{H}_{12}(x,y)| \leq \frac{b}{|b /\e0 - x|}$ using $0\leq x \leq b / \e0$.
Next, note that it holds for $(x,y) \in B$ by the definition of the set $B$ that
\begin{align*}
|y - r_0| \leq \left| \frac{r_0 \e0}b x - x + b/\e0 - r_0\right| + \left| \frac{r_0 \e0}b x - r_0\right|
\leq \left(2 \frac{r_0 \e0}b + 1\right) \left|x - \frac{b}{\e0}\right|   \leq 3 \left|x - \frac{b}{\e0}\right|
\end{align*}
and consequently
\begin{align*}
    |\hat{H}(x,y)| \leq C \frac{b}{|b/\e0 - x|} \leq 4C \frac{b}{|b/\e0 - x| + |y - r_0|} \leq 4C \frac{b}{\operatorname{dist}(x,\{(i b/\e0,r_0): i \in \Z\})}.
\end{align*}
This finishes the proof of \eqref{eq: est hat H}. 

Next, we show the following corresponding estimate for $H$
\begin{align} \label{eq: est H}
    |H(x,y)| \leq C \left(  \frac{b}{r_0} \ds1_{ S}(x,y) + \frac{b}{\operatorname{dist}((x,y),\{(ib/\e0,r_0): i \in \Z \})} \ds1_{\{ y \leq 4 b/\e0\} \setminus S}(y) + \e0 \ds1_{\{ y \leq 4 b/\e0\}}(y) \right),
\end{align}
where $S = \bigcup_{i \in \Z} B_{2r_0}((i b/\e0,r_0))$.
First, note that since $r_0 \leq b / \e0$ it follows from \eqref{eq: est hat H} that $H(x,y) = 0$ if $y \geq 4 b / \e0$.
Moreover, we obtain by \eqref{eq: est hat H} that for $y \leq 4 b / \e0$
\begin{align*}
    |H(x,y)| \leq &C\left( \left(\frac{b}{\operatorname{dist}(\cdot,\{(ib/\e0,r_0): i \in \Z\})} * J_{r_0}\right) (x,y) + (\e0 * J_{r_0} )(x,y) \right) \\
    \leq &C \left( \left(\frac{b}{\operatorname{dist}(\cdot,\{(ib/\e0,r_0): i \in \Z\})} * J_{r_0} \right)(x,y) + e_0 \right).
\end{align*}
First, let us consider $(x,y) \in \R^2 \setminus S$ and $(x',y') \in B_{r_0}(x,y)$. 
Then it holds 
\begin{align*}
\operatorname{dist}((x',y'), \{(ib/\e0,r_0): i \in \Z\}) \geq &\operatorname{dist}((x,y),\{(ib/\e0,r_0): i \in \Z\}) - r_0 \\ \geq &\frac12 \operatorname{dist}((x,y),\{(ib/\e0,r_0): i \in \Z\})
\end{align*}
and thus
\begin{align*}
    &\left(\frac{b}{\operatorname{dist}(\cdot,\{(ib/\e0,r_0): i \in \Z\})} \ast J_{r_0} \right) (x,y) \\
    = & \int_{B_{r_0}(x,y)} \frac{b}{\operatorname{dist}((x',y'), \{(ib/\e0,r_0): i \in \Z\})} J_{r_0}((x'-x,y'-y)) \, \dd x' \dd y' \\
    \leq &2 \frac{b}{\operatorname{dist}((x,y),\{(ib/\e0,r_0): i \in \Z\})}.
\end{align*}
On the other hand, let us now consider $(x,y) \in S$, i.e. there exists $i \in \Z$ such that $(x,y) \in B_{2r_0}((i b/ \e0, r_0))$.
Then we may estimate using Young's inequality for convolutions and $b/\e0 \geq 4 r_0$
\begin{align*}
\left| \left(\frac{b}{\operatorname{dist}(\cdot,\{(ib/\e0,r_0): i \in \Z\}} * J_{r_0} \right) (x,y) \right| 
\leq &\left\| \frac{b}{\operatorname{dist}(\cdot,\{(ib/\e0,r_0): i \in \Z\}} \right\|_{L^1(B_{r_0}((x,y))} \| J_{r_0} \|_{L^{\infty}} \\
\leq &C r_0^{-2} \int_{B_{3r_0}((ib/ \e0,r_0))} \frac{b}{|(x',y') - (ib/\e0,r_0)|} \, d\mathcal{L}^2 \\
= &6\pi C \frac{b}{r_0}. 
\end{align*}
This finishes the proof of \eqref{eq: est H}.

{\it Step 5: Estimate for the elastic energy.}

Using the estimate \eqref{eq: est H} we obtain by \eqref{eq:Wgrowth}
\begin{align} \nonumber
    &\int_{\Omega_h} W(H) \, d\mathcal{L}^2 \leq \frac1{c_1}\int_{\Omega_h}|\Hsym|^2\dd \calL^2\leq\frac{1}{c_1} \int_{\Omega_h}|H|^2\dd \calL^2 \\ \nonumber
    &\leq C \int_{(0,L) \times (0,4b/ \e0)} \left[ \frac{|b|^2}{r_0^2} \ds1_S + \frac{|b|^2}{\operatorname{dist}(\cdot ,\{(ib/\e0,r_0): i \in \Z\})^2} \ds1_{S^c} + \e0^2 \right]\, d\mathcal{L}^2 \\ \nonumber 
    &\leq C\left[ 4 \pi (k+2) b^2 + \sum_{i=0}^{k+1} \int_{B_{4b / \e0}((ib / \e0,r_0)) \setminus B_{2r_0}((ib / \e0,r_0))} \frac{b^2}{|(x,y) - (i b / \e0,r_0)|^2} \, \dd\mathcal{L}^2 (x,y) + 4 L b \e0  \right] \\ \nonumber
    &\leq C k b^2 (1 + \log(b/ (\e0 r_0)) + C L b \e0 \\ 
    &\leq C L b \e0 (1 + \log( b / (\e0 r_0)) ), \label{eq:ubelastic}
\end{align}
where we used that $k b^2 \leq L b \e0$ and
\begin{align*}
    &\int_{B_{4b / \e0}((ib / \e0,r_0)) \setminus B_{2r_0}((ib / \e0,r_0))} \frac{b^2}{|(x,y) - (i b / \e0,r_0)|^2} \, \dd\mathcal{L}^2 (x,y) \\ 
=& 2 \pi b^2 \int_{2r_0}^{4b / \e0} \frac1t \, dt = 2\pi b^2 \log(2 b / (\e0 r_0) ) \leq  2\pi b^2 ( \log( b / (\e0 r_0) ) + 1)  . 
\end{align*}

{\it Step 6: Choice of $L$ and conclusion. }
Combining \eqref{eq:ubsurf}, \eqref{eq:ubnucl}, and \eqref{eq:ubelastic}, we obtain
\begin{eqnarray}
    \label{eq:ubwithdisloc}
\calF(h,H,\sigma)\leq
        c\left( \gamma + \frac{\d\gamma}{L}+L\e0b\left(1+c_0+\log(b/(\e0r_0)\right)\right).
        \end{eqnarray}
We now choose 
\begin{eqnarray}
    \label{eq:choiceLub}
    L:=\min\left\{\gamma^{1/2}\d^{1/2}\left[\e0b(1+c_0+\log(b/(\e0r_0))\right]^{-1/2},\,1,\,\d/(4r_0)\right\}.
\end{eqnarray}
Note that this choice yields the minimum in the upper bound from \eqref{eq:ubwithdisloc}. We consider the three cases for the choice of $L$ in \eqref{eq:choiceLub} separately:\\
If $L=\left(\gamma\d\right)^{1/2}\left[\e0 b(1+c_0+\log(b/(\e0r_0))\right]^{-1/2}$, then inserting this choice in \eqref{eq:ubwithdisloc}
 yields the upper bound 
 \begin{eqnarray}\label{eq:ub1}
        \calF(h,H,\sigma) \leq c\left( \gamma(1 + \d) + \left[\gamma\e0b\d(1+c_0+\log(b/(\e0r_0))\right]^{1/2}\right).
\end{eqnarray}
If $L= 1$ then $1\leq (\gamma\d)^{1/2}\left[\e0b(1+c_0+\log(b/(\e0r_0))\right]^{-1/2}$, and thus \eqref{eq:ubwithdisloc} gives
\begin{eqnarray}\label{eq:ub2}
  \calF(h,H,\sigma) &\leq& c\left(\gamma(1+\d)+  (\gamma\d)^{1/2}\left[\e0b(1+c_0+\log(b/(\e0r_0))\right]^{-1/2}\e0b\left(1+c_0+\log(b/(\e0r_0)\right)\right)\nonumber\\
  &=& c\left( \gamma(1 + \d) + \left[\gamma\e0b\d(1+c_0+\log(b/(\e0r_0))\right]^{1/2}\right).
\end{eqnarray}
If $L=\d/(4r_0)$, we use $L=\d/(4r_0)\leq (\gamma\d)^{1/2}\left[\e0b(1+c_0+\log(b/(\e0r_0))\right]^{-1/2}$ and $r_0\leq 1$ to get from \eqref{eq:ubwithdisloc}
\begin{eqnarray}\label{eq:ub3}
\calF(h,H,\sigma) &\leq& c\left( \gamma(1 + \d) + 4\gamma r_0+\left[\gamma\e0b\d(1+c_0+\log(b/(\e0r_0))\right]^{1/2}\right)\nonumber\\
&\leq& c\left( \gamma(1 + \d) + \left[\gamma\e0b\d(1+c_0+\log(b/(\e0r_0))\right]^{1/2}\right). \end{eqnarray}
Combining \eqref{eq:ub1}, \eqref{eq:ub2}, and \eqref{eq:ub3} concludes the proof of the first part of Prop. \ref{prop:ub}. \\

Part 2 follows from the proof of the upper bound in \cite[Theorem 3.1]{GZ:2013}. For the readers' convenience, we briefly recall the construction, for details we refer to \cite[Theorem 3.1]{GZ:2013}. Let $L\in (0,1]$, and $h$ as in \eqref{eq:ubchoiceh}, see Figure \ref{fig:profile}. 
Let $H$ be the restriction to $\Omega_h$ of $\tilde{H}\colon [0,1]\times [0,\infty) \rightarrow \R^{2\times 2}$ given by
\[
        \tilde{H}(x,y) \coloneqq \begin{pmatrix} \e0(1-\frac{1}{L}y) & -\frac{1}{L} \e0x \\
                0 & 0
        \end{pmatrix}
\]
for $y\in [0,L]$ and $\tilde{H}(x,y) \equiv 0$ for $y>L$. Note that $H=\nabla u$ for $u$ as defined in the proof of \cite[Theorem 3.1]{GZ:2013}. Set $\sigma = 0$. Then $(h,H,\sigma)\in \adm$, and (recall \eqref{eq:ubsurf})
\[
    \calF(h,H,\sigma) \leq 4\gamma\left(1+\frac{\d}{L}\right)+\frac{1}{c_1}\int_{\Omega_h}\vert H\vert^2\dd\calL^2+0 \leq c\left(\gamma+\frac{\d\gamma}{L}+2\e0^2 L^2\right).
\]
If $\e0^2 \leq \d\gamma$, then we choose $L\coloneqq 1$, and if $\e0^2 \geq \d\gamma$, we choose $L\coloneqq (\d\gamma)^{1/3}\e0^{-2/3} \leq 1$.
It follows in both cases (using $\e0^2\leq (\e0\gamma \d)^{2/3}$ in the first case) that
\[
	\calF(h,H,0) \leq c \left(\gamma(1 + \d) + (\gamma \e0\d)^{2/3}\right).
\]
\end{proof}
We note that Proposition \ref{prop:ub} implies the upper bound in Theorem \ref{thm: main_intro}.
\begin{Cor}\label{cor:ub}
    There is a constant $c_s>0$ with the following property: For all 
$\e0,b,\d>0$ and $r_0\in (0,1]$ with  $b/\e0 \geq 4 r_0$ there holds
\begin{eqnarray*}
  \inf_{\adm} \calF(h,H,\sigma)
  \leq  c_s \left(\gamma(1 + \d) + \min\left\{(\e0 \gamma\d)^{2/3}, \left[\gamma \e0b\d\left(1+\log\left(\frac{b}{\e0r_0}\right)\right)\right]^{1/2}\right\}\right).
\end{eqnarray*}
\end{Cor}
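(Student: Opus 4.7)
The plan is to simply combine the two assertions of Proposition \ref{prop:ub}. Under the hypotheses of Corollary \ref{cor:ub}, namely $r_0 \in (0,1]$ and $b/\e0 \geq 4r_0$, the assumption of part (i) of Proposition \ref{prop:ub} is satisfied, so there exists an admissible triple $(h_1,H_1,\sigma_1)\in\adm$ with
\[
\calF(h_1,H_1,\sigma_1) \leq c_s\Bigl(\gamma(1+\d)+\bigl[\gamma\e0 b\d\bigl(1+\log(b/(\e0 r_0))\bigr)\bigr]^{1/2}\Bigr).
\]
Part (ii) of Proposition \ref{prop:ub} holds without any restriction on the parameters, so there also exists an admissible triple $(h_2,H_2,\sigma_2)\in\adm$ (with $\sigma_2=0$ in the construction recalled from \cite{GZ:2013}) with
\[
\calF(h_2,H_2,\sigma_2) \leq c_s\Bigl(\gamma(1+\d)+(\e0\gamma\d)^{2/3}\Bigr).
\]

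Since the infimum of $\calF$ over $\adm$ is bounded above by the energy of \emph{any} admissible configuration, I would simply take
\[
\inf_{\adm}\calF(h,H,\sigma) \leq \min\bigl\{\calF(h_1,H_1,\sigma_1),\, \calF(h_2,H_2,\sigma_2)\bigr\},
\]
and move the minimum inside by factoring out the common term $\gamma(1+\d)$, possibly at the cost of enlarging the constant $c_s$ by a factor of at most $2$. This yields the claimed bound
\[
\inf_{\adm}\calF(h,H,\sigma)\leq c_s\Bigl(\gamma(1+\d)+\min\bigl\{(\e0\gamma\d)^{2/3},\,[\gamma\e0 b\d(1+\log(b/(\e0 r_0)))]^{1/2}\bigr\}\Bigr).
\]

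There is no real obstacle here: the work has already been done in Proposition \ref{prop:ub}, and the corollary only observes that choosing the better of the two constructions (with vs.\ without dislocations) gives the infimum bound appearing in Theorem \ref{thm: main_intro}. The only minor point is bookkeeping of the constant, which is harmless.
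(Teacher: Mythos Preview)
Your proposal is correct and matches the paper's own proof, which likewise derives the corollary directly from the two parts of Proposition~\ref{prop:ub} by a case distinction on which term attains the minimum. The only superfluous remark is the possible doubling of the constant: since $\min\{A+B_1,\,A+B_2\}=A+\min\{B_1,B_2\}$ exactly, no enlargement of $c_s$ is needed.
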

\begin{proof}
If $ (\e0 \gamma\d)^{2/3}\leq \left[\gamma \e0b\d\left(1+\log\left(\frac{b}{\e0r_0}\right)\right)\right]^{1/2}$, the assertion follows from Proposition \ref{prop:ub}(ii).\\
If $ (\e0 \gamma\d)^{2/3}\geq \left[\gamma \e0b\d\left(1+\log\left(\frac{b}{\e0r_0}\right)\right)\right]^{1/2}$
the assertion follows from Proposition \ref{prop:ub}(i).
\end{proof}
\section{Lower Bound}\label{sec:lowerbound}
\subsection{Preliminaries}
In this section we collect various results that will be needed in the proof of the lower bound of Theorem \ref{thm: main_intro}. 
We start with the following generalization of the isoperimetric estimate \eqref{eq:surface}, see Figure  \ref{fig:isoperineq2} for an illustration. 
\begin{Lemma}\label{isoperineq2}
Let $h: [0,1] \to [0,\infty)$ be Lipschitz with $h(0)=h(1) = 0$ and $\int_0^1 h \, d\mathcal{L}^1 = d$. Moreover, let $J\subseteq \Z$. For $i\in J$ let  $x_i\in (0,1)$ and $l_i>0$  be such that $x_{i} + l_i \leq x_j$ for all $i,j \in J$ with $i<j$ and $\bigcup_{i\in J} (x_i,x_i+l_i)\times (0,l_i) \subset \Omega_h$. Define $L_J\coloneqq \sum_{i\in J} l_i$ and 
$\d_J\coloneqq \sum_{i\in J} \int_{(x_i,x_i+l_i)} h\dd \calL^1 $. 
Then
\[
	\int_0^1 \sqrt{1+\vert h'\vert^2}\dd \calL^1 \geq 2 \frac{\d_J}{L_J}.
\]
\end{Lemma}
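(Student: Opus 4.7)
The plan is to reduce the statement to the bound $\int_0^1 |h'|\dd\calL^1 \geq 2d_J/L_J$ via the pointwise inequality $\sqrt{1+|h'|^2}\geq |h'|$, and then to produce a single point at which $h$ is large by exploiting that the islands $(x_i,x_i+l_i)$ each contribute average height $d_i/l_i$ to the volume constraint. This mimics the one-point ``find the maximum'' argument in~\eqref{eq:surface}, with the key new ingredient being a way to deduce largeness of $\max h$ from largeness of the aggregate ratio $d_J/L_J$.

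Concretely, since $h$ is continuous on the compact interval $[0,1]$, it attains its maximum at some $\bar{x}\in[0,1]$. For every $i\in J$ the trivial pointwise bound $h\leq h(\bar{x})$ on $(x_i,x_i+l_i)$ gives $\int_{x_i}^{x_i+l_i} h\dd\calL^1 \leq h(\bar{x})\, l_i$. Summing over $i\in J$ yields
\[
d_J = \sum_{i\in J}\int_{x_i}^{x_i+l_i} h\dd\calL^1 \leq h(\bar{x})\sum_{i\in J} l_i = h(\bar{x}) L_J,
\]
hence $h(\bar{x}) \geq d_J/L_J$. (Note $L_J\leq 1$ and $d_J\leq d$ by disjointness, so these sums are well-defined even if $J$ is infinite.)

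Combining this with the boundary conditions $h(0)=h(1)=0$ and splitting the total variation at $\bar{x}$, exactly as in~\eqref{eq:surface}, gives
\[
\int_0^1 \sqrt{1+|h'|^2}\dd\calL^1 \geq \int_0^{\bar{x}} |h'|\dd\calL^1 + \int_{\bar{x}}^1 |h'|\dd\calL^1 \geq 2h(\bar{x}) \geq 2\frac{d_J}{L_J},
\]
which is the claim.

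There is no real obstacle here: the argument is a direct two-step generalization of the classical bound in~\eqref{eq:surface}. Interestingly, the geometric hypothesis $(x_i,x_i+l_i)\times(0,l_i)\subset\Omega_h$ (which forces $h\geq l_i$ on each island) is \emph{not} used in this estimate; it is part of the admissibility of the length-scale partition and will be exploited in the subsequent steps of the lower bound, where one needs to turn this volume estimate into matching bounds on elastic and dislocation energies.
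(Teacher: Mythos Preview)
Your proof is correct and follows essentially the same approach as the paper: pick $\bar{x}$ a maximizer of $h$, observe $d_J\le h(\bar{x})L_J$ by bounding $h\le h(\bar{x})$ on each $(x_i,x_i+l_i)$, and then use $\int_0^1|h'|\ge 2h(\bar{x})$ via the boundary conditions. Your remark that the hypothesis $(x_i,x_i+l_i)\times(0,l_i)\subset\Omega_h$ is not used in this lemma is also accurate.
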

\begin{proof}
As $h$ is continuous, there exists $\overline{x}\in [0,1]$  such that $h(\overline{x})= 
\sup h=:\overline{h}.$ 
\begin{figure}[htpb!]
        \centering
        \includegraphics{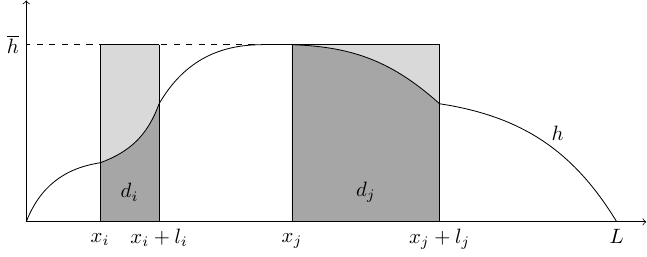}
        \caption{possible configuration for Lemma \ref{isoperineq2}}
\label{fig:isoperineq2}
\end{figure}
Then, using $L_J \overline{h} \geq \d_J$, we find
\begin{align*}
	\int_a^b \sqrt{1+\vert h'\vert^2} \dd \calL^1&\geq \int_a^b \vert h'\vert \dd \calL^1
		= \int_a^{\overline{x}} \vert h'\vert \dd \calL^1 + \int_{\overline{x}}^b \vert h'\vert \dd \calL^1
		\geq 2 \overline{h} 
		\geq 2 \frac{\d_J}{L_J}.
\end{align*} 
\end{proof}

We recall the Korn's inequality for fields with non-vanishing $\curl$, which follows from \cite[Theorem 11]{GLP}.

\begin{Thm}\label{thm: korn}
    Let $A \subseteq \R^2$ be open, connected, bounded with Lipschitz boundary, and denote by $\mathcal{M}(A;\R^2)$ the space of vector-valued Radon measures. Then there exists $c_A > 0$ such that for all $H \in L^2(A;\R^{2 \times 2})$ with $\curl H \in \mathcal{M}(A;\R^2)$ it holds
    \[
    \min_{W \in Skew(2)} \int_A |H - W|^2  \dd \calL^2 \leq c_A \left( \int_A |H_{\textup{sym}}|^2 \dd \calL^2 + |\curl H|(A)^2 \right),
    \]
    where $Skew(2)\subseteq\R^{2\times 2}$ denotes the space of skew-symmetric matrices.
\end{Thm}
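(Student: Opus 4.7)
The plan is to prove this Korn-type inequality by the classical strategy of splitting $H = G + (H-G)$, where $G\in L^2(A;\R^{2\times 2})$ absorbs the $\curl$ of $H$ and the remainder $H-G$ is curl-free; the standard Korn inequality can then be applied to the gradient part, while the contribution of $G$ is controlled in $L^2$ by the total variation of $\curl H$. The key new input beyond the usual Korn inequality is an $L^2$-bounded right inverse of $\curl$ at the level of Radon measures, which is exactly what \cite[Theorem 11]{GLP} provides.

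I would first invoke \cite[Theorem 11]{GLP} to obtain $G \in L^2(A;\R^{2\times 2})$ with $\curl G = \curl H$ in $\mathcal{M}(A;\R^2)$ and
\[
    \|G\|_{L^2(A)}^2 \leq c_A\,|\curl H|(A)^2.
\]
Heuristically, such a $G$ can be built row by row by taking a logarithmic potential $v_i$ on $\R^2$ satisfying $\Delta v_i = (\curl H)_i$, setting $G_i \coloneqq (\partial_2 v_i,-\partial_1 v_i)$, and then subtracting a harmonic correction so that the result respects the geometry of $A$. Since $\curl(H-G)=0$ distributionally and $A$ is simply connected in the situations in which the lemma is used in Section \ref{sec:lowerbound} (in the general multiply connected case, a finite-dimensional cohomological correction can be absorbed into the skew matrix below), there exists $u\in H^1(A;\R^2)$ with $H-G=\nabla u$. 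The classical Korn inequality on the bounded Lipschitz domain $A$ then produces some $W\in \operatorname{Skew}(2)$ with
\[
    \int_A |\nabla u - W|^2\dd\calL^2 \leq c_A \int_A |(\nabla u)_{\mathrm{sym}}|^2\dd\calL^2.
\]
Combining this with the identity $(\nabla u)_{\mathrm{sym}} = \Hsym - G_{\mathrm{sym}}$, the pointwise bound $|G_{\mathrm{sym}}|\leq |G|$, the triangle inequality $|H-W|^2 \leq 2|\nabla u - W|^2 + 2|G|^2$, and the $L^2$-bound on $G$ then yields the claim.

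The main obstacle is the construction of the right inverse $G$ with the stated $L^2$-bound controlled by the total variation of $\curl H$. The difficulty is that a generic Radon measure in the plane need not lie in $H^{-1}$, so one cannot directly apply the standard $L^2$-theory of elliptic problems; however, the joint hypothesis $H\in L^2$ and $\curl H\in\mathcal{M}(A;\R^2)$ places $\curl H$ into a sufficiently regular negative Sobolev class, and sharply encoding this trade-off is precisely the content of \cite[Theorem 11]{GLP}. Once that is in hand, the remaining steps are a routine combination of the classical Korn inequality and the triangle inequality.
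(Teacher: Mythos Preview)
The paper does not give a proof of this statement; it is simply recalled from \cite[Theorem~11]{GLP}, which \emph{is} the generalized Korn inequality for incompatible fields (proved there by a contradiction/compactness argument). Your proposal, by contrast, attempts an independent derivation via a splitting $H=G+\nabla u$, and the key step---the existence of a right inverse of $\curl$ with $\|G\|_{L^2(A)}^2\le c_A\,|\curl H|(A)^2$---is false in two dimensions. Take $A=B_1(0)$ and $\curl H=J_r$, a nonnegative mollifier of unit mass supported in $B_r(0)$. Any $G$ with $\curl G=J_r$ differs on $A\setminus B_r$ from $\tfrac{1}{2\pi}\tfrac{x^\perp}{|x|^2}$ only by a gradient; minimizing over such gradients (equivalently, solving $\Delta w=J_r$ in $B_1$ with $w=0$ on $\partial B_1$ and setting $G=\nabla^\perp w$) still gives $\|G\|_{L^2(A)}^2\ge c\log(1/r)\to\infty$ as $r\to0$, while $|\curl H|(A)=1$ throughout. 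So no such universal bound on a right inverse exists.

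You correctly flag this as the main obstacle in your last paragraph, but the resolution you propose is circular: \cite[Theorem~11]{GLP} is (the prototype of) the very inequality you are trying to prove, not an $L^2$--total-variation bound for a $\curl$-inverse. The underlying reason your scheme cannot be repaired along these lines is that in dimension two one only has $\|G\|_{L^2}\lesssim\|\curl H\|_{H^{-1}}$, and $\mathcal{M}(A)\not\hookrightarrow H^{-1}(A)$ (Dirac masses are the borderline case). The compactness proof in \cite{GLP} sidesteps this by never constructing such a $G$: one normalizes $\min_{W}\|H_n-W\|_{L^2}=1$, assumes $\|(H_n)_{\mathrm{sym}}\|_{L^2}+|\curl H_n|(A)\to0$, passes to a weak limit that is curl-free with vanishing symmetric part (hence a constant skew matrix, forced to be $0$ by the normalization), and upgrades to strong convergence using the classical Korn inequality on the curl-free limit, yielding a contradiction.
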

\begin{Rm}\label{rem:Korn}
    Note that by scaling, the constant $c_A$ can be chosen to be uniform for all annuli with the same thickness ratio. Similarly, the constant can be chosen uniformly for all rectangles with side ratios between $\frac14$ and $4$.
\end{Rm}

Next, we state the following estimate for annuli which do not carry too much $\curl$ relative to the $\curl$ in the enclosed ball.

\begin{Lemma}\label{lemma: est annulus}
Let $R > r > 0$. Let $H \in L^2(B_R(0) ;\R^{2\times 2})$ with $\curl H = \burg\sum_{i=1}^N \delta_{x_i} \ast J_{r_0}$ 
such that $\left|\curl H\right|(B_R(0) \setminus B_r(0)) \leq \sqrt{\frac{\log(R/r)}{4\pi c_{\textup{Korn}}(R/r) }} \left|\curl H\right| (B_r(0))$, where $c_{\textup{Korn}}(R/r) = c_{B_R(0) \setminus B_r(0)}$ is the constant from Theorem \ref{thm: korn} (c.f.~Remark \ref{rem:Korn}). Then it holds
\[
\int_{B_R(0) \setminus B_r(0)} |H_{\textup{sym}}|^2 \dd \calL^2 \geq \frac1{4 \pi c_{\textup{Korn}}(R/r)} |\curl H| (B_r(0))^2 \log(R/r).
\]
\end{Lemma}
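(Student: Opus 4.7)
The plan is to combine a circulation-based lower bound for \(H\) relative to a constant skew matrix with Korn's inequality applied to the whole annulus. The hypothesis on \(|\curl H|(B_R(0)\setminus B_r(0))\) is precisely what is needed to absorb the error term in Korn's inequality.

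First, I would fix a constant skew matrix \(W \in \up{Skew}(2)\) realizing the minimum in Korn's inequality on \(A := B_R(0) \setminus B_r(0)\). For each radius \(t \in (r, R)\), since \(W\) is constant and skew, \(W \tau\) averages to zero around the circle \(\partial B_t(0)\); combined with a distributional Stokes identity for \(H_1\) this yields
\[
\int_{\partial B_t(0)} (H_1 - W_1) \cdot \tau \dd \calH^1 = \int_{B_t(0)} \curl H_1,
\]
where \(H_1, W_1\) denote the first rows. Because \(\burg = (b,0)\) and \(J_{r_0} \geq 0\), the scalar measure \(\curl H_1 = b \sum_i \delta_{x_i} \ast J_{r_0}\) has constant sign, so the absolute value of the right-hand side equals \(|\curl H|(B_t(0))\), which by monotonicity is at least \(|\curl H|(B_r(0))\). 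Applying Cauchy--Schwarz on \(\partial B_t(0)\) I would obtain
\[
\int_{\partial B_t(0)} |H_1 - W_1|^2 \dd \calH^1 \geq \frac{1}{2\pi t} |\curl H|(B_r(0))^2,
\]
and then integrating in \(t\) via coarea (and using \(|H-W|^2\geq|H_1-W_1|^2\)) gives
\[
\int_A |H - W|^2 \dd \calL^2 \geq \frac{|\curl H|(B_r(0))^2 \log(R/r)}{2\pi}.
\]

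Next, I would combine this lower bound with Theorem \ref{thm: korn} applied on \(A\) (whose constant is \(c_{\up{Korn}}(R/r)\) by Remark \ref{rem:Korn}) to get
\[
\frac{|\curl H|(B_r(0))^2 \log(R/r)}{2\pi} \leq c_{\up{Korn}}(R/r) \left( \int_A |\Hsym|^2 \dd \calL^2 + |\curl H|(A)^2 \right).
\]
Using the standing hypothesis \(|\curl H|(A)^2 \leq \frac{\log(R/r)}{4\pi c_{\up{Korn}}(R/r)} |\curl H|(B_r(0))^2\) to absorb the second term on the right into the left would then yield exactly the claimed inequality with the prefactor \(\frac{1}{4\pi c_{\up{Korn}}(R/r)}\).

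I expect the only subtle point to be the rigorous justification of the circulation identity for \(H \in L^2\) whose \(\curl\) is only a measure. This should follow for \(\calL^1\)-a.e. \(t \in (r,R)\) from the standard tangential trace theorem for fields with \(\curl\) in \(\mathcal{M}\) (as invoked elsewhere in the paper via \cite{Lions3,Boyer/Fabrie:2012}), or alternatively by mollifying \(H\) and passing to the limit. Once this identity is in hand, all remaining steps are direct applications of Cauchy--Schwarz, Fubini, and the stated Korn-type estimate.
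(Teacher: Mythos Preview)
Your proposal is correct and follows essentially the same route as the paper: apply Korn's inequality on the annulus to obtain a skew matrix $W$, use the circulation/Stokes identity together with Cauchy--Schwarz and the sign structure of $\curl H$ to get $\int_A|H-W|^2\dd\calL^2\geq \frac{1}{2\pi}\log(R/r)\,|\curl H|(B_r(0))^2$, and then absorb the $|\curl H|(A)^2$ term via the hypothesis. The paper likewise handles the regularity issue by mollification, so your treatment of that point matches as well.
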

\begin{proof}
First, we apply Theorem \ref{thm: korn} to find $W \in Skew(2)$ such that
\begin{align}\label{eq: est korn circle}
&c_{\textrm{Korn}}(R/r) \int_{B_R(0) \setminus B_r(0)} |\Hsym|^2 \dd \calL^2 \\ \geq  &\int_{B_R(0) \setminus B_r(0)} |H - W|^2 \dd \calL^2  - c_{\textrm{Korn}}(R/r) |\curl H|(B_R(0) \setminus B_r(0))^2. \nonumber
\end{align}
Next, if $H$ is smooth we estimate similarly to \cite[Remark 3]{GLP} using Stokes' theorem 
\begin{align*} 
\int_{B_R(0) \setminus B_r(0)} |H - W|^2 \dd \calL^2 &\geq \int_{r}^R \int_{\partial B_t(0)} |H-W|^2 \dd \calH^1 \dd t \nonumber\\
&\geq \int_r^R \frac{1}{2\pi t} \left| \int_{\partial B_t(0)} (H-W)\cdot \tau \dd \calH^1 \right|^2 \dd t \nonumber\\
&\geq \int_r^R \frac{1}{2\pi t} \left| \curl H (B_t(0))\right|^2 \dd t \nonumber\\
&\geq \frac1{2\pi} \log(R/r) \left| \curl H \right|(B_r(0))^2. 
\end{align*}
Here we used that due to the specific form of $\curl H$ we have $|\curl H (B_t(0))| = |\curl H|(B_t(0)) \geq |\curl H|(B_r(0))$ for all $t \in (r,R)$.
Then the estimate for general $H$
\begin{equation} \label{eq: est stokes circle}
\int_{B_R(0) \setminus B_r(0)} |H - W|^2 \dd \calL^2 \geq \frac1{2\pi} \log(R/r) \left| \curl H \right|(B_r(0))^2
\end{equation}
follows by a standard mollification argument.
The assertion now follows by combining \eqref{eq: est korn circle} and \eqref{eq: est stokes circle} with the assumption $\left|\curl H\right|(B_R(0) \setminus B_r(0)) \leq \sqrt{\frac{\log(R/r)}{4\pi c_{\text{Korn}}(R/r) }} \left| \curl H\right| (B_r(0))$.
\end{proof}
Additionally, we recall the so-called ball construction as introduced for the analysis of vortices in the Ginzburg-Landau energy, see \cite{Jerrard99,Sandier98}. For an application to dislocations see, for example, \cite{alicandro-et-al:14,Ginster:19}. This will allow us to prove logarithmic lower bounds on the energy under mild assumptions on the maximal number of dislocations.
\begin{Lemma}[Ball-construction]\label{lemma: ball construction}
    Let $(B_{r_i}(p_i))_{i \in I} $ be a finite family of open balls in $\R^2$. Then for every $t>0$ there exists a finite family of open balls $(B_{r_i(t)}(p_i(t)))_{i \in I(t)}$ with pairwise disjoint closures such that the following properties hold: 
    \begin{enumerate}
        \item $\sum_{i \in I(t)} r_i(t) \leq e^t \sum_{i \in I} r_i$,
        \item $\bigcup_{i \in I} B_{r_i}(p_i) \subseteq \bigcup_{i \in I(t)} B_{r_i(t)}(p_i(t))$,
        \item for all $s \in (0, t]$ and $i \in I(s)$ there exists  a unique $j \in I(t)$ such that $
B_{e^{t-s}r_i(s)}(p_i(s)) \subseteq B_{r_j(t)}(p_j(t))$.
    \end{enumerate}
\end{Lemma}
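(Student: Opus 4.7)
The plan is a time-parametrised alternation of expansion and merging, in the spirit of the Sandier--Jerrard ball construction. I parametrise by $s \in [0,t]$ and maintain at each time a finite family of open balls $(B_{r_i(s)}(p_i(s)))_{i\in I(s)}$ with pairwise disjoint closures. If the given initial family already has overlapping closures, I first iteratively merge any two balls $B_{\rho_1}(q_1),B_{\rho_2}(q_2)$ whose closures intersect into a single ball of radius $\rho_1+\rho_2$; such a containing ball exists because any two overlapping balls fit into a ball of radius equal to the sum of their radii (centre chosen suitably on the segment joining $q_1,q_2$). Since each merger strictly decreases the number of balls, this pre-merging terminates and produces a disjoint family, which I declare to be the family at time $s=0$. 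It preserves $\sum_i r_i$ and only enlarges the union.

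Given a disjoint family at some time $s_k$, I let each ball grow homothetically: for $s>s_k$ set $p_i(s):=p_i(s_k)$ and $r_i(s):=e^{s-s_k}r_i(s_k)$, and define $s_{k+1}\in(s_k,t]$ as the first time at which two distinct closures touch (or set $s_{k+1}=t$ and stop if no such time lies in $(s_k,t]$). At $s_{k+1}$ I again apply the iterative pairwise-merging procedure to restore disjointness, then continue the expansion. Each merger strictly reduces the number of balls, so only finitely many merger times can occur and the procedure terminates with a disjoint family at $s=t$. Property (1) follows by composition: during pure expansion $\sum_i r_i(s)$ grows by the factor $e^{s-s_k}$, while each merger preserves it, so $\sum_{i\in I(t)} r_i(t)\leq e^t\sum_{i\in I}r_i$ (inequality from the possible pre-merging). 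Property (2) holds because the union $\bigcup_i B_{r_i(s)}(p_i(s))$ is monotone non-decreasing in $s$: expansion enlarges each ball, and every merger replaces two balls by a single one that contains both.

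For (3), fix $s\in(0,t]$ and $i\in I(s)$ and follow the phantom ball $D(s'):=B_{e^{s'-s}r_i(s)}(p_i(s))$, whose centre is frozen at $p_i(s)$. I claim by induction on the number of mergers in $[s,s']$ that some $j(s')\in I(s')$ satisfies $D(s')\subseteq B_{r_{j(s')}(s')}(p_{j(s')}(s'))$: between mergers both $r_D$ and $r_{j(s')}$ scale by the same factor $e^{\cdot}$ while $p_i(s)$ and $p_{j(s')}$ stay fixed, so the containment inequality $|p_i(s)-p_{j(s')}|+r_D\leq r_{j(s')}$ is preserved (the deficit scales by a factor $\geq 1$); at a merger, the current containing ball is absorbed into a larger ball that contains it, and hence still contains $D$. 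At $s'=t$, pairwise disjointness of the final family forces the index $j\in I(t)$ to be unique, giving (3). The main subtlety is the geometric content of the merging step — that two overlapping balls fit into a ball of radius equal to the sum of their radii, and that this containment propagates correctly through subsequent homothetic expansions — which is what makes (1) and (3) go through. Everything else is finite combinatorial bookkeeping over the finitely many merger events.
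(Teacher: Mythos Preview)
Your proof is correct and follows essentially the same construction as the paper: iterated merging to disjointness at $s=0$, exponential expansion, and re-merging at collision times, with the radius sum preserved by mergers and scaled by $e^{\Delta s}$ under expansion. Your argument for property~(3) via the phantom ball is in fact more detailed than the paper's, which simply asserts that ``all claimed properties are easily checked''; the one cosmetic slip is that pre-merging preserves the radius sum exactly, so property~(1) holds with equality rather than the strict inequality you hint at.
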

\begin{proof}
    We sketch the proof for the convenience of the reader in Figure \ref{sketchballconstruction}. In order to construct $I(0)$ and $(B_{r_i(0)}(p_i(0)))_{i \in I(0)}$, we iterate the following construction. If $\overline{B_{r_i}(p_i)} \cap \overline{B_{r_j}(p_j)} \neq \emptyset$ for $i\neq j$ then set $\tilde{I} = I \setminus \{j\}$ and replace $B_{r_i}(p_i)$ by the ball $B_{\tilde{r}_i}(\tilde{p}_i)$, where
    \[
\tilde{r}_i = r_i + r_j \qquad \text{ and } \qquad \tilde{p}_i = \frac{r_i}{r_i + r_j} p_i + \frac{r_j}{r_i + r_j} p_j. 
    \]
    Then $B_{r_i}(p_i) \cup B_{r_j}(p_j) \subseteq B_{\tilde{r}_i}(\tilde{p}_i)$. This procedure terminates after finitely many steps and defines the index set $I(0)$ and the family $(B_{r_i(0)}(p_i(0)))_{i \in I(0)}$. The claimed properties of this family can be easily checked.
    Next, as long as this defines a family of open balls with pairwise disjoint closures we set for $t>0$ the index set $I(t) = I(0)$, the radii $r_i(t) = e^t r_i(0)$ and the centers $p_i(t) = p_i(0)$. For the first time $t>0$ such that the family of balls $(B_{e^t r_i(0)}(p_i(0)))_{i \in I(0)}$ does not have pairwise disjoint closures anymore, we perform a merging procedure that is similar to the construction for $t=0$. Precisely, for two balls $\overline{B_{e^t r_i(0)}(p_i(0))} \cap \overline{B_{e^t r_j(0)}(p_j(0))} \neq \emptyset$ for $i \neq j$ we set $\tilde{I}(t) = I(0) \setminus \{j\}$, 
    \[
    \tilde{r}_i(t) = e^t \left( r_i(0) + r_j(0) \right) \qquad \text{ and } \qquad \tilde{p}_i(t) = \frac{r_i(0)}{r_i(0) + r_j(0)} p_i(0) + \frac{r_j(0)}{r_i(0) + r_j(0)} p_j(0). 
    \]
    Again, if the family $(B_{\tilde{r}_i(t)}(\tilde{p}_i(t)))_{i \in \tilde{I}(t)}$ has pairwise disjoint closures then set $I(t) = \tilde{I}(t)$, $r_i(t) = \tilde{r}_i(t)$ and $p_i(t) = \tilde{p}_i(t)$, otherwise iterate this construction.
    Eventually, for $s > t$ we set again $I(s) = I(t)$, $r_i(s) = e^{s-t} r_i(t)$ and $p_i(s) = p_i(t)$ as long as the family $(B_{r_i(s)}(p_i(s)))_{i \in I(s)}$ has pairwise disjoint closures. For the first $s > t$ such that this is not true anymore, we construct $I(s)$, $r_i(s)$ and $p_i(s)$ through the same merging procedure as before. All claimed properties are easily checked.
\end{proof}
\begin{figure}
        \centering
        \subfloat[initial situation]{\includegraphics[width=50mm]{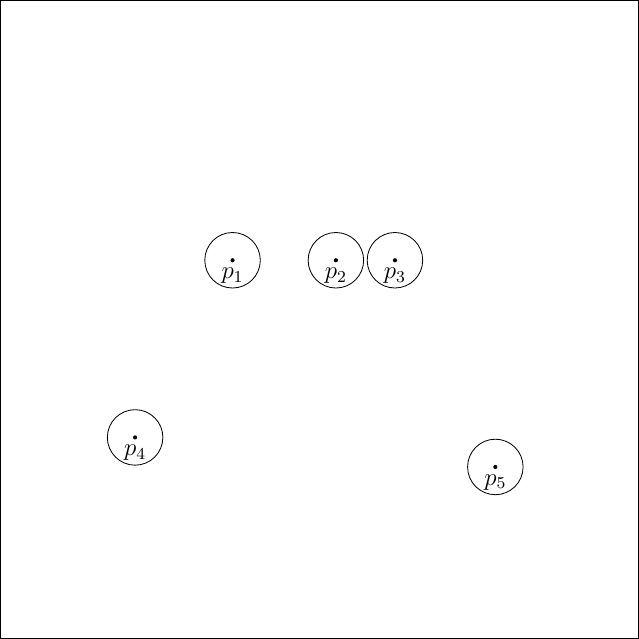}}\
        \subfloat[before merging]{\includegraphics[width=50mm]{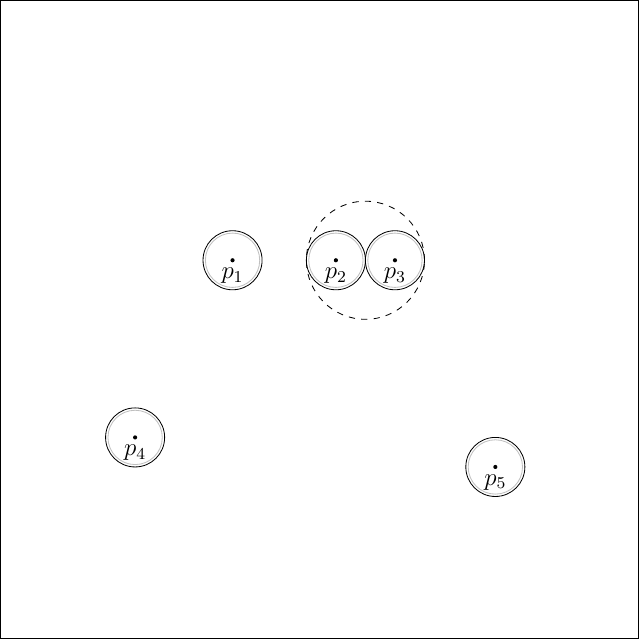}}\
        \subfloat[after merging]{\includegraphics[width=50mm]{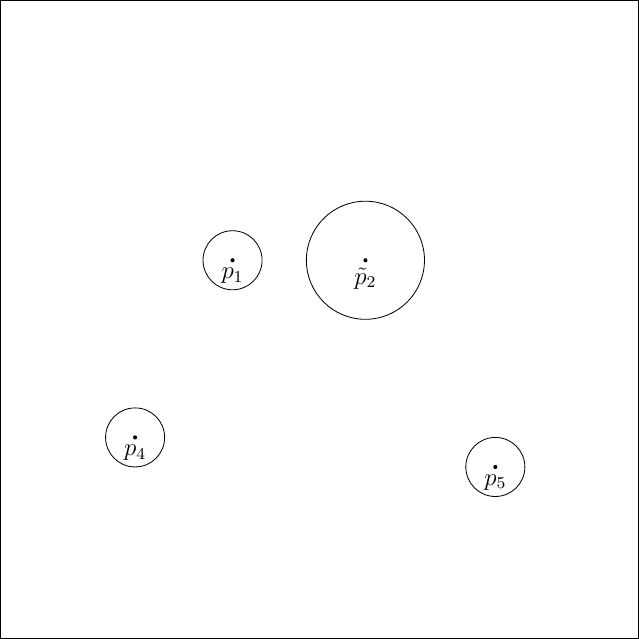}}\\
        \subfloat[before merging]{\includegraphics[width=50mm]{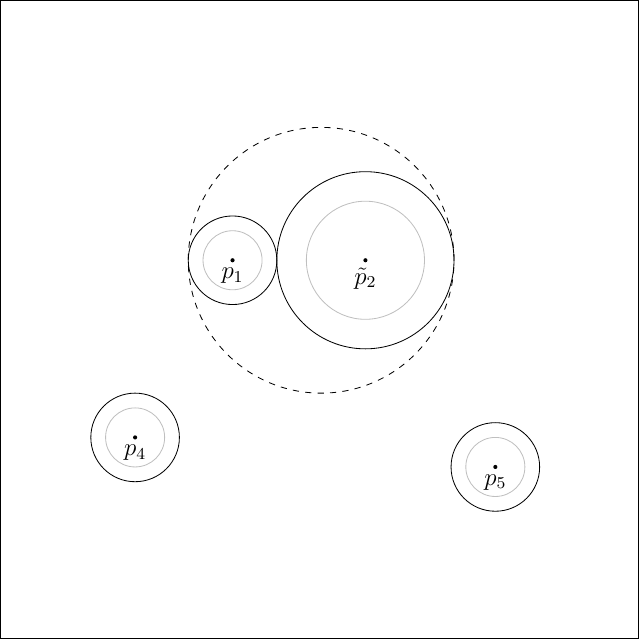}}\
        \subfloat[after merging]{\includegraphics[width=50mm]{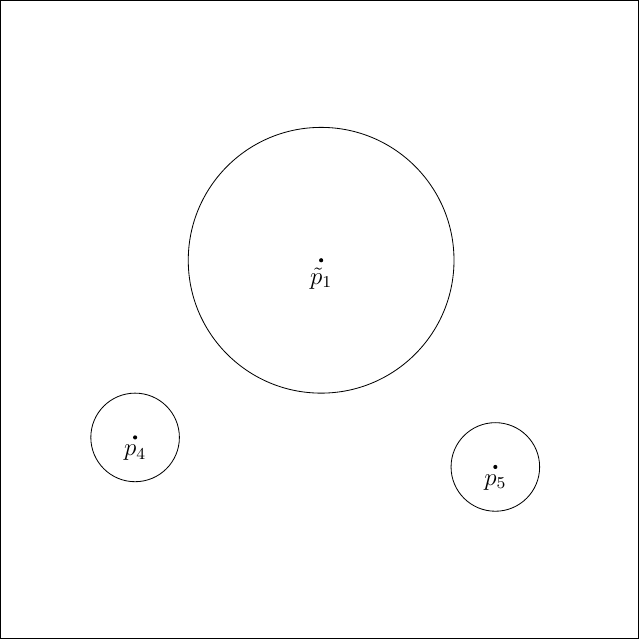}}\
        \subfloat[before merging, but another ball is overlapping]{\includegraphics[width=50mm]{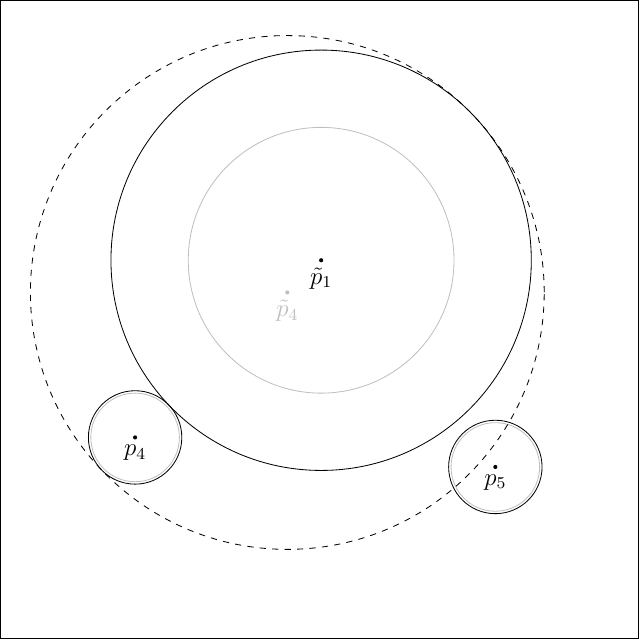}}\\
        \subfloat[before merging taking the overlapped ball into account]{\includegraphics[width=50mm]{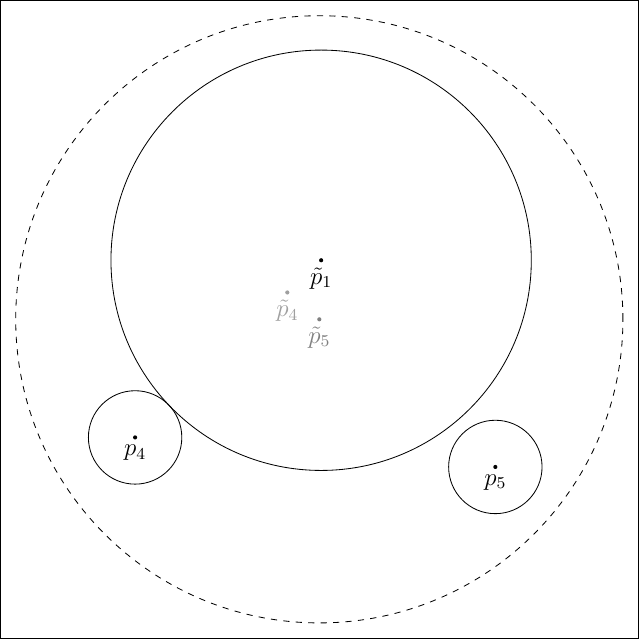}}\
        \subfloat[final situation]{\includegraphics[width=50mm]{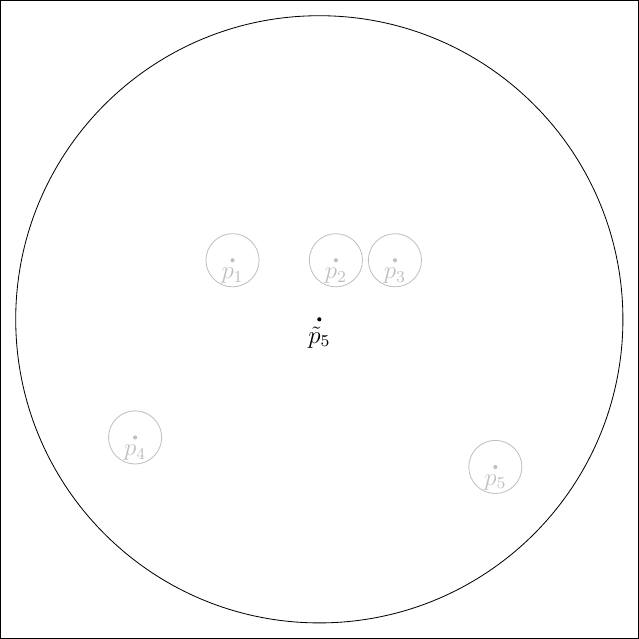}}
        \caption{sketch of ball construction for five balls with equal starting radii}
        \label{sketchballconstruction}
\end{figure}
\begin{Rm}\label{rem: ball construction}
    It can be seen in the construction that for $0 \leq s < t$ and $i \in I(t)$ it holds $i \in I(s)$ with $r_i(t) = e^{t-s} r_i(s)$ and $p_i(t) = p_{i}(s)$ if and only if the ball $B_{r_i(t)}(p_i(t))$ only includes the starting balls $B_{r_j}(p_j)$, $j \in I$, that are already included in  $B_{r_i(s)}(p_i(s))$.
\end{Rm}

\begin{Lemma} \label{lemma: log}
    There exists a constant $c > 0$ such that the following holds for all $b/\e0 > 64^4 r_0 > 0$. Let $A \subseteq \R^2$ open and $p_1, \dots, p_N \in \R^2$ with $N \leq \left\vert\log \left( \frac{b}{\e0 r_0} \right)\right\vert^2$. Additionally, let $H\in L^2(A;\R^{2\times 2})$ with $\up{curl\,}H= \burg (\sum_{n=1}^N J_{r_0} \ast \delta_{p_n})$ in $A$. Assume that $B_{b / (32\e0)}(p_n) \subseteq A$ for $n=1,\dots, n_0 \leq N$. Then
    \[
    \int_{\bigcup_{n=1}^{n_0} B_{b / (32 \e0)}(p_n) } \vert H_{\up{sym}}\vert^2 \,d \mathcal{L}^2 \geq c b^2 n_0 \, \log\left( \frac{b}{\e0r_0} \right).
    \]
\end{Lemma}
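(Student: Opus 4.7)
The plan is to apply the ball construction of Lemma \ref{lemma: ball construction} to the starting family $\{B_{r_0}(p_n)\}_{n=1}^N$, run it up to a time $T \sim \log(b/(\e0 r_0))$ at which the balls containing good dislocations are still small enough to fit inside $A$, and then harvest a logarithmic contribution from each annular growth phase via Lemma \ref{lemma: est annulus}.

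First I would choose $T := \log\bigl(b/(64 N \e0 r_0)\bigr)$. By property~1 of Lemma \ref{lemma: ball construction}, every $i \in I(T)$ satisfies $r_i(T) \leq e^T N r_0 \leq b/(64\e0)$, so any ball containing some $p_n$ with $n \leq n_0$ (call such a ball \emph{good}) is contained in $B_{b/(32\e0)}(p_n) \subseteq A$. Combining $N \leq |\log(b/(\e0r_0))|^2$ with the threshold $b/(\e0 r_0) \geq 64^4$, a direct computation gives $T \geq c_1 \log(b/(\e0 r_0))$ for some absolute constant $c_1 > 0$.

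Next, for each good $j \in I(T)$ I would analyse the merging history. Define the ancestors $\mathrm{Anc}(j, t) := \{ i \in I(t) : B_{r_i(t)}(p_i(t)) \subseteq B_{r_j(T)}(p_j(T)) \}$ and the dislocation counts $d_i(t)$ (number of starting balls enclosed by $i$ at time $t$). A key observation is that an ancestor of $j$ never merges with a non-ancestor, since otherwise the merged ball would fail to fit in $B_{r_j(T)}(p_j(T))$; consequently, between consecutive \emph{relevant} merging times $0 = \tau_0 < \tau_1 < \cdots < \tau_{M_j} = T$, each ancestor $i$ dilates smoothly, and by Remark \ref{rem: ball construction} no starting ball enters the concentric annulus
\[
A_{k,i} := B_{e^{\tau_{k+1}-\tau_k} r_i(\tau_k)}(p_i(\tau_k)) \setminus B_{r_i(\tau_k)}(p_i(\tau_k)).
\]
The $A_{k,i}$ are pairwise disjoint and contained in $B_{r_j(T)}(p_j(T)) \subseteq A$.

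On each $A_{k,i}$ the hypothesis of Lemma \ref{lemma: est annulus} is trivially satisfied since $|\curl H|(A_{k,i}) = 0$; using a uniform Korn constant $\bar c$ for annuli of aspect ratio $\geq 2$ (obtained from Remark \ref{rem:Korn} via dyadic splitting into ratio-$2$ sub-annuli) it yields
\[
\int_{A_{k,i}} |\Hsym|^2 \dd\calL^2 \geq \frac{\tau_{k+1}-\tau_k}{4\pi \bar c} (d_i(\tau_k) b)^2.
\]
Summing first over $i \in \mathrm{Anc}(j, \tau_k)$ with $\sum_i d_i(\tau_k)^2 \geq \sum_i d_i(\tau_k) = d_j \geq d_j^+$, where $d_j^+$ denotes the number of good dislocations ($n \leq n_0$) enclosed in $B_{r_j(T)}(p_j(T))$, and then over $k$ using $\sum_k (\tau_{k+1}-\tau_k) = T$, gives $\int_{B_{r_j(T)}(p_j(T))} |\Hsym|^2 \dd\calL^2 \geq c_2\, T\, d_j^+\, b^2$. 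A final summation over all good $j$ together with $\sum_{j\text{ good}} d_j^+ = n_0$ and the pairwise disjointness of good balls inside $\bigcup_{n=1}^{n_0} B_{b/(32\e0)}(p_n)$ produces the claim with $c = c_1 c_2 / (4\pi\bar c)$.

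The main obstacle I foresee is the combinatorial bookkeeping of the ball construction (tracking ancestors and verifying disjointness of all annuli across potentially up to $N \sim |\log(b/(\e0 r_0))|^2$ merging events) combined with a uniform control of the Korn constant across many scales. The quantitative assumption $N \leq |\log(b/(\e0 r_0))|^2$ and the threshold $b/(\e0 r_0) \geq 64^4$ enter precisely to guarantee that the losses from dyadic subdivision and from short merging intervals remain subdominant to the leading $\log(b/(\e0 r_0))$ term.
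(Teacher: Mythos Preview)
Your overall strategy—run the ball construction up to a time $T \sim \log(b/(\e0 r_0))$ and harvest the annular contributions via Lemma~\ref{lemma: est annulus}—is the right one, and your estimate $T \geq c_1 \log(b/(\e0 r_0))$ from $N \leq |\log(b/(\e0 r_0))|^2$ is correct. The gap is in the displayed annulus bound with a ``uniform'' constant $\bar c$. Dyadic splitting into ratio-$2$ sub-annuli produces \emph{nothing} on an interval with $\tau_{k+1}-\tau_k < \log 2$, so what you actually harvest per good ball $j$ is at best $(T - M_j \log 2)_+\, d_j^+ b^2/(4\pi\bar c)$. Since $M_j \leq d_j - 1 \leq N - 1$ and $N$ may be as large as $|\log(b/(\e0 r_0))|^2$, this loss is of order $(\log)^2$, which \emph{dominates} $T \sim \log$. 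Concretely, if a single good ball at time $T$ encloses all $N\sim(\log)^2$ dislocations and the merges are spread over $[0,T]$ with spacing $T/(N-1)\ll \log 2$, every interval is too short for even one ratio-$2$ sub-annulus and your bound collapses to zero. Hence the claim that ``losses from dyadic subdivision and short merging intervals remain subdominant'' is false as stated.

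The paper closes exactly this gap by a case distinction that your proposal is missing. It stops the construction earlier, at $t_1 \sim \tfrac12 \log(b/(\e0 r_0))$, so that each ball has radius $\leq (b r_0/\e0)^{1/2}\ll b/\e0$ and there is still room \emph{outside}. Then (Claim~2) if a good ball at time $t_1$ encloses fewer than $K\log(b/(\e0 r_0))$ dislocations, one partitions $[0,t_1]$ into $4\lceil K\log\rceil$ \emph{equal} sub-intervals of fixed length $s_1$ (hence a single fixed Korn constant $c_{\textup{Korn}}(e^{s_1})$); at most $K\log$ of them can contain a merge, leaving $\geq 3K\log$ clean annuli on which Lemma~\ref{lemma: est annulus} applies. (Claim~1) If instead some ball encloses $\geq K\log(b/(\e0 r_0))$ dislocations, one works with concentric annuli of a fixed ratio $k_1$ \emph{outside} that ball; the global budget $N\leq(\log)^2$ forces at least half of these $\sim\log$ annuli to carry little curl, so Lemma~\ref{lemma: est annulus} yields $\gtrsim b^2(K\log)^2\cdot\log \geq b^2 n_0\log$ in one shot. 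Either of these ingredients would repair your argument, but at least one of them is needed.
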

\begin{proof}
We apply the ball-construction from Lemma \ref{lemma: ball construction} to the family of open balls $(B_{r_0}(p_i))_{1 \leq i \leq N}$ to obtain for every $t\geq 0$ a family of open balls with pairwise disjoint closures $(B_{r_i(t)}(q_i(t)))_{i \in I(t)}$ satisfying the properties 1.,2.~and 3.~from Lemma \ref{lemma: ball construction}. 
Let us define $t_1 := \frac{\log\left( \frac{b}{\e0 r_0} \right)}{2} - 2 \log\left( \log(b / (\e0 r_0)) \right) > 0$. 
It follows from 1.~in Lemma \ref{lemma: ball construction} and $N \leq \log(b/ (\e0 r_0))^2$ that for all $i \in I(t_1)$ we have 
\begin{equation}\label{eq: est radius t1}
r_i(t_1) \leq e^{t_1} \log(b/ (\e0 r_0))^2 r_0 = \left( \frac{b}{\e0} r_0\right)^{1/2} \leq \frac1{64^2} \frac{b}{\e0} \leq \frac{1}{64} \frac{b}{\e0}. 
\end{equation}
From now on, let us set for $t\leq t_1$ 
\[
\tilde{I}(t) = \{ i \in I(t): \exists j \in I(t_1) \text{ ~s.~t.~ } B_{r_0}(p_n) \cup B_{r_i(t)}(q_i(t)) \subseteq  B_{r_j(t_1)}(q_j(t_1)) \text{ for some } 1 \leq n \leq n_0 \},
\]
the set of all balls at time $t$ that are related to the balls $B_{r_0}(p_n)$, $1\leq n \leq n_0$, at time $t_1$. Note that by \eqref{eq: est radius t1} it follows for every $i \in \tilde{I}(t)$ that 
\[
B_{r_i(t)}(q_i(t)) \subseteq B_{r_j(t_1)}(q_j(t_1)) \subseteq \bigcup_{n=1}^{n_0} B_{2 e^{t_1} \log(b/(\e0 r_0))^2 r_0}(p_n) \subseteq A.
\]

Now we distinguish two cases depending on whether a large amount of the mass of $\burg\sum_{k=1}^N \delta_{p_k}\ast J_{r_0}$ has accumulated in a single ball $B_{r_i(t_1)}(q_i(t_1))$, $i \in \tilde{I}(t_1)$, or not. If this is not the case, we derive lower bounds using a combinatorial argument  that guarantees long expansion times through the ball construction (see claim 2 below). If this is the case then we use Lemma \ref{lemma: est annulus} to obtain estimates in the domain $B_{b/(64 \e0 r_0)}(q_i(t_1)) \setminus B_{r_i(t_1)}(q_i(t_1)) \subseteq A$ (see claim 1 below).

For this, we fix $k_1>1$ such that $\log(k_1) \leq \frac{1}{4}$. Then 
\[
\left\lfloor \frac{ \frac12 \log \left( \frac{b}{\e0 r_0} \right) - \log(64) }{\log(k_1)}  \right\rfloor \geq \left\lfloor \frac{ \frac14 \log \left( \frac{b}{\e0 r_0} \right) }{\log(k_1)}  \right\rfloor \geq \log \left( \frac{b}{\e0 r_0}\right) -1 \geq \frac12 \log \left( \frac{b}{\e0 r_0}\right)   \geq 2. 
\]
Next, set $K = \lceil 16 \sqrt{\frac{4\pi c_{\textrm{Korn}}(k_1)}{\log(k_1)}} \rceil$, where $c_{\text{Korn}}(k_1)$ is the constant from Theorem \ref{thm: korn} for annuli with thickness ratio $k_1$ (c.f.~Remark \ref{rem:Korn}).
\newline 
\underline{Claim 1:} If there exists $i \in \tilde{I}(t_1)$ such that $\#\{ p_{n} \in B_{r_i(t_1)}(q_i(t_1)): 1 \leq n \leq n_0 \} \geq K \log \left( \frac{b}{\e0 r_0} \right)$ then it holds for a constant $c(k_1)>0$ that
\begin{equation}\label{eq: log case 1}
\int_{A} |\Hsym|^2 \dd \calL^2 \geq c(k_1) b^2 \log\left( \frac{b}{\e0 r_0} \right)^3.
\end{equation}

Fix $L = \left\lfloor \frac{ \frac12 \log \left( \frac{b}{\e0 r_0} \right) - \log(64) }{\log(k_1)} \right\rfloor \geq \frac12 \log \left( \frac{b}{\e0 r_0}\right)$.
Note that it follows that 
\[
k_1^L r_i(t_1) \leq k_1^L \left( \frac{b}{\e0} r_0 \right)^{1/2} \leq \frac{b}{64 \e0}.
\]
For $1 \leq l \leq L$, we define $A_l := B_{k_1^{l} r_i(t_1)}(q_i(t_1)) \setminus B_{k_1^{l-1} r_i(t_1)}(q_i(t_1))$.
Note that by definition of $\tilde{I}(t_1)$ there exists $1 \leq n \leq n_0$ such that $p_n \in A_l$. 
Hence, $A_l \subseteq B_{2 k_1^{l} r_i(t_1)}(p_n) \subseteq B_{b/(32 \e0)}(p_n) \subseteq A$. 
Next, let us assume that there exists $J \subseteq \{1,\dots, L\}$ with $\#J \geq \frac{L}2$ such that
\[
|\curl H|(A_l) \geq b \sqrt{\frac{\log(k_1)}{4\pi c_{\textrm{Korn}}(k_1)}}K \log\left( \frac{b}{\e0 r_0} \right).
\]
Then it holds that 
\[
b N \geq |\curl H|(A) \geq b \frac{L}{2} \sqrt{\frac{\log(k_1)}{4\pi c_{\textrm{Korn}}(k_1)}} K \log\left( \frac{b}{\e0 r_0} \right) \geq 8 b \log\left( \frac{b}{\e0 r_0} \right)^2,
\]
which contradicts $N \leq \log\left( \frac{b}{\e0 r_0} \right)^2$.
Hence, there exists $J \subseteq \{1,\dots, L\}$ with $\#J \geq \frac{L}2$ such that
\[
|\curl H|(A_l) \leq b \sqrt{\frac{\log(k_1)}{4\pi c_{\textrm{Korn}}(k_1)}}K \log\left( \frac{b}{\e0 r_0} \right).
\]
Hence, we obtain by Lemma \ref{lemma: est annulus} that
\begin{align*}
\int_{A} |\Hsym|^2 \dd \calL^2 & \geq \sum_{l \in J} \int_{A_l} |\Hsym|^2 \dd \calL^2 \\
&\geq \#J \frac1{4\pi c_{\textrm{Korn}}(k_1)} \left( |\curl H|(B_{r_i(t_1)}(q_i(t_1))) \right)^2  \log(k_1) \\
&\geq \frac{L}2 \frac1{4\pi c_{\textrm{Korn}}(k_1)} b^2 \left( K \log\left( \frac{b}{\e0 r_0} \right) \right)^2 \log(k_1) \\
&\geq \frac{\log(k_1)}{8\pi c_{\textrm{Korn}}(k_1)} b^2 K^2 \log\left( \frac{b}{\e0 r_0} \right)^3 
\end{align*}
This shows \eqref{eq: log case 1}.  

\underline{Claim 2:} If $i \in \tilde{I}(t_1)$ is such that $\#\{ p_{n} \in B_{r_i(t_1)}(q_i(t_1)): 1 \leq n \leq N \} < K \log \left( \frac{b}{\e0 r_0} \right)$ then we have for a constant $c(K)>0$ that
\begin{equation}\label{eq: log case 2}
\int_{B_{r_i(t_1)}(q_i(t_1))} |\Hsym|^2 \dd \calL^2 \geq c(K) b^2 \#\{ p_n \in B_{r_i(t_1)}(q_i(t_1)): 1 \leq n \leq n_0 \} \log \left( \frac{b}{\e0 r_0} \right) .
\end{equation}

Let us fix $M = 4 \lceil K \log( b / (\e0 r_0) ) \rceil$ and define $s_m = \frac{m}{M} \lfloor t_1  \rfloor$ for $0 \leq m \leq M$. 
Since $\#\{ p_n \in B_{r_i(t_1)}(q_i(t_1)): 1 \leq n \leq N \} < K \log \left( \frac{b}{\e0 r_0} \right)$ it follows by Remark \ref{rem: ball construction} that there exist $J \subseteq \{1,\dots, M \}$ with $\#J \geq \frac{M}{4}$ such that for all $m \in J$ it holds for all $j \in I(s_{m})$ with $B_{r_j(s_m)}(q_j(s_m)) \subseteq B_{r_i(t_1)}(q_i(t_1))$ that these balls are purely expanding between $s_m$ and $s_{m+1}$, namely $r_j(s_{m+1}) = e^{s_{m+1} - s_m} r_j(s_m)$ and $q_j(s_{m+1}) = q_j(s_m)$. 
Note that this implies in particular that $\curl H = 0$ in $B_{r_j(s_{m+1})}(q_j(s_{m+1})) \setminus B_{r_j(s_m)}(q_j(s_m))$.
It follows by Lemma \ref{lemma: est annulus} that
\begin{align*}
&\int_{B_{r_i(t_1)}(q_i(t_1))} |\Hsym|^2 \dd \calL^2 \\ 
\geq &\sum_{m \in J} \sum_{B_{r_j(s_m)}(q_j(s_m)) \subseteq B_{r_i(t_1)}(q_i(t_1))} \int_{B_{r_j(s_{m+1})}(q_j(s_{m+1})) \setminus B_{r_j(s_m)}(q_j(s_m)) } |\Hsym|^2 \dd \calL^2 \\
\geq &\sum_{m \in J} \sum_{B_{r_j(s_m)}(q_j(s_m)) \subseteq B_{r_i(t_1)}(q_i(t_1))}  \frac{\log(e^{s_1})}{4\pi c_{\textrm{Korn}}(e^{s_1})} b^2 \, (\#\{ p_n \in B_{r_j(s_m)}(q_j(s_m)): 1 \leq n \leq n_0 \})^2 \\
\geq &\sum_{m \in J} \sum_{B_{r_j(s_m)}(q_j(s_m)) \subseteq B_{r_i(t_1)}(q_i(t_1))}  \frac{\log(e^{s_1})}{4\pi c_{\textrm{Korn}}(e^{s_1})}  b^2 \, \#\{ p_n \in B_{r_j(s_m)}(q_j(s_m): 1\leq n \leq n_0 \} \\
= &(\# J) \frac{\log(e^{s_1})}{4\pi c_{\textrm{Korn}}(e^{s_1})} b^2 \, \#\{ p_n \in B_{r_i(t_1)}(q_i(t_1)): 1 \leq n \leq n_0 \} \\
\geq & \lceil K \log( b / (\e0 r_0) ) \rceil \frac{\log(e^{s_1})}{4\pi c_{\textrm{Korn}}(e^{s_1})} b^2 \, \#\{ p_n \in B_{r_i(t_1)}(q_i(t_1)): 1 \leq n \leq n_0 \} \\
\geq & c(K) b^2 \, \#\{ z_n \in B_{r_i(t_1)}(q_i(t_1)): 1 \leq n \leq n_0 \} \, \log\left( \frac{b}{\e0 r_0} \right).
\end{align*}
This shows \eqref{eq: log case 2}. \newline 

Now, note that if the assumption of claim 1 is true for one $i \in \tilde{I}(t_1)$, we find that 
\[
\int_{B_{b/(64\e0 r_0)}(p_i)} |\Hsym|^2 \dd \calL^2 \geq c(k_1) b^2 \log \left( \frac{b}{\e0 r_0} \right)^3 \geq c(k_1) b^2 n_0 \log \left( \frac{b}{\e0 r_0} \right),
\]
where we used that $n_0 \leq \log \left( \frac{b}{\e0 r_0}\right)^2$.
If, on the other hand, for all $i \in \tilde{I}(s_1)$ the assumption of claim 1 is not satisfied, we find using claim 2 and summing over all $i \in \tilde{I}(t_1)$
\[
\int_{\bigcup_{n=1}^{n_0} B_{b / (32\e0)}(p_n) } | H_{\textrm{sym}}|^2 \,d \mathcal{L}^2 \geq \sum_{i \in \tilde{I}(t_1)} \int_{B_{r_i(t_1)}(q_i(t_1))} |\Hsym|^2 \dd \calL^2 \geq c(K) b^2 n_0 \, \log\left( \frac{b}{\e0r_0} \right).
\]
\end{proof}

Complementing the result above we show here that the elastic energy (after application of Korn's inequality) can be estimated similarly to the fully elastic setting if $\curl H_1$ is small. 

\begin{Lemma}\label{eenergybetween2} Let $(h,H,\sigma) \in \adm, x_i \in (0,1)$ and $l_i\in (0,1-x_i)$ such that $(x_i,x_i+l_i)\times (0,l_i) \in \Omega_h$. Further let $\overline{x}\in (x_i,x_i+l_i/8)$ and assume $\int_{(\overline{x}, 2x_i+l_i-\overline{x}\times (0,l_i/2)} \curl H_1 \dd \calL^2 < \e0 l_i / 4.$ Then for any $W\in Skew(2)$ it follows that
\[
    \int_{(\overline{x}, 2x_i+l_i-\overline{x}\times (0,l_i/2)} \vert H-W\vert^2\dd \calL^2 \geq \frac{\e0^2 l_i^2}{768}.
\]
\end{Lemma}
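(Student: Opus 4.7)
The plan is to propagate the Dirichlet datum $H_{11}(\cdot,0)=\e0$ on the bottom of the rectangle $R:=(\overline{x}, 2x_i+l_i-\overline{x})\times(0,l_i/2)$ into the interior via a family of Stokes identities for the first row $H_1 = (H_{11},H_{12})$. Note that $R \subseteq (x_i, x_i+l_i) \times (0, l_i) \subseteq \Omega_h$ and that the width $L := 2x_i + l_i - 2\overline{x}$ lies in $(3l_i/4, l_i)$. For $\alpha \in (\overline{x}, \overline{x}+l_i/32)$, $\beta \in (2x_i+l_i-\overline{x}-l_i/32, 2x_i+l_i-\overline{x})$, and $y \in (0,l_i/2)$, Stokes' theorem applied to $D_{\alpha,\beta,y}:=(\alpha,\beta)\times(0,y)$ together with $H_{11}(\cdot,0)=\e0$ gives
$$
\e0(\beta-\alpha) - \int_{D_{\alpha,\beta,y}} \curl H_1 \dd\calL^2 = \int_\alpha^\beta H_{11}(x,y)\,dx + \int_0^y\bigl[H_{12}(\alpha,t)-H_{12}(\beta,t)\bigr]dt.
$$
Since $W\in Skew(2)$ has $W_{11}=0$ and the same constant $W_{12}$ enters the right-hand side on the two vertical edges with opposite signs, $H$ may be replaced by $H-W$ throughout; moreover, the hypothesis $\int_R \curl H_1 \dd\calL^2 < \e0 l_i/4$ combined with $\beta-\alpha \geq 11l_i/16$ gives the uniform lower bound $\e0(\beta-\alpha) - \int_{D_{\alpha,\beta,y}} \curl H_1 \geq 7\e0 l_i/16$ on the left-hand side.

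I would then integrate the identity over the product region $(\alpha, \beta, y) \in (\overline{x}, \overline{x}+l_i/32) \times (2x_i+l_i-\overline{x}-l_i/32, 2x_i+l_i-\overline{x}) \times (0,l_i/2)$ of measure $l_i^3/2048$, yielding a lower bound of order $\e0 l_i^4$ on the integrated right-hand side. On the upper side, Fubini followed by Cauchy-Schwarz reduces each of the three contributions to a bound of the form $C l_i^3 \|H-W\|_{L^2(R)}$: for the area-type term $\int_\alpha^\beta (H-W)_{11}(x,y)\,dx$ this is routine, while for the vertical-edge terms $\int_0^y (H-W)_{12}(\alpha,t)\,dt$ (and its analogue at $\beta$) the averaging over $\alpha$ across the thin strip of width $l_i/32$ is essential, as it converts the pointwise trace into an $L^2$ norm of $(H-W)_{12}$ over an interior strip of $R$. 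Combining both bounds produces $\|H-W\|_{L^2(R)}^2 \gtrsim \e0^2 l_i^2$, and careful bookkeeping of the numerical constants delivers the claimed factor $\tfrac{1}{768}$.

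The main obstacle is the absence of any boundary datum on the vertical sides of $R$: a trace inequality on a single slice $\{\alpha\}\times(0,l_i/2)$ would require $H^{1/2}$-type regularity, which is not available for an arbitrary $L^2$-matrix field with $L^2$-curl. The key device is therefore to view Stokes' theorem as a \emph{family} of identities parametrised by $(\alpha,\beta,y)$ and to integrate over this family, converting uncontrolled pointwise traces into manageable $L^2$ contributions over thin interior strips of $R$.
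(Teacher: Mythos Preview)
Your approach is correct and shares the same core idea as the paper's proof: apply Stokes' theorem on sub-rectangles of $R$ to convert the bottom datum $H_{11}(\cdot,0)=\e0$ into a lower bound on $\|H-W\|_{L^2(R)}$, exploiting that $W_{11}=0$ and that the constant $W_{12}$ cancels across the two vertical edges. The implementations differ only in how the vertical-edge traces are controlled. The paper works with a \emph{single} symmetric pair of slices at $x$ and $2x_i+l_i-x$ (together with a height $y\in(l_i/4,l_i/2)$), selected by a Chebyshev-type argument so that the one-dimensional slice integrals $\int|H-W|^2\dd\calH^1$ are bounded by a fixed multiple of the area average; it then makes the use of Stokes rigorous via a mollification $H^\delta_{\varepsilon_\delta}\to H$. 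You instead integrate the Stokes identity over the whole three-parameter family $(\alpha,\beta,y)$, which directly converts all trace terms into interior $L^2$-integrals over thin strips and bypasses the slice-selection step. Two small points worth making explicit: (i) your bound $\int_{D_{\alpha,\beta,y}}\curl H_1\leq\int_R\curl H_1$ tacitly uses $\curl H_1\geq 0$, which does hold for $(h,H,\sigma)\in\calA$ since $\curl H_1=b\sum_i J_{r_0}(\cdot-p_i)$; (ii) the Stokes identity for a \emph{fixed} triple $(\alpha,\beta,y)$ is not a priori meaningful for $H\in L^2$ with $\curl H\in L^2$, and the cleanest justification in your scheme is to first establish the \emph{integrated} identity for a mollified field and then pass to the limit, at which point every term is an honest interior $L^2$-integral. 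With strip width $l_i/32$ the arithmetic actually gives a constant somewhat better than $1/768$, so the stated inequality follows.
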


\begin{proof}
We extend $H$ by $\begin{pmatrix}  \e0 && 0 \\ 0 && 0 \end{pmatrix}$ to $(x_i,x_i+l_i) \times [0,-\infty)$.
Next, set $A_1 \subseteq (\overline{x}, x_i+l_i/4)$ to be the set of all $x \in (\overline{x}, x_i+l_i/4)$ such that
\[
    \int_{\{x,2x_i+l_i-x\}\times (0,l_i/2)} \vert H-W\vert^2 \dd \calH^1 \leq \frac{16}{l_i} \int_{(\overline{x},2x_i+l_i-\overline{x})\times (0,l_i/2)} \vert H-W\vert^2 \dd \calL^2.
\]
Then $\mathcal{L}^1(A_1) \geq \frac{l_i}{16}$.
Similarly, set $A_2$ to be the set of all $y\in (l_i/4, l_i/2)$ such that
\[
    \int_{(\overline{x},2x_i+l_i-\overline{x})\times \{y\}} \vert H-W\vert^2 \dd \calH^1 \leq \frac{8}{l_i} \int_{(\overline{x},2x_i+l_i-\overline{x})\times (0,l_i/2)} \vert H-W\vert^2 \dd \calL^2.
\]
It follows $\mathcal{L}^1(A_2) \geq \frac{l_i}8$.
\begin{figure}[htpb!]
        \centering
        \includegraphics{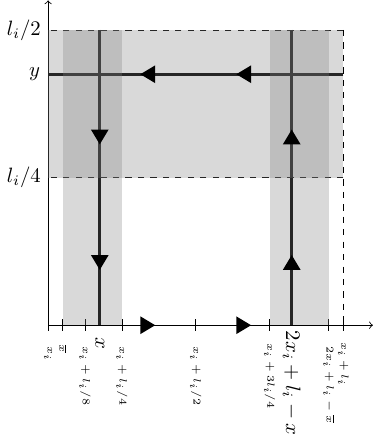}
        \caption{slice selection in Lemma \ref{eenergybetween2} with indicated path for Stokes' theorem}\label{fig: stokes}
\end{figure}
Next, let $  0 <  \varepsilon < \delta < (\overline{x}- x_i) < l_i / 8$ and $\varphi_{\varepsilon}$ a standard mollifier. 
Define $H^{\delta}(x,y) = H(x,y-\delta)$ and $H_{\varepsilon}^{\delta} = H^{\delta} \ast \varphi_{\varepsilon}$.
Note that $H^{\delta}_{\varepsilon} \stackrel{\varepsilon \to 0}{\to} H^{\delta} \stackrel{\delta \to 0}{\to} H$ in $L^2((\overline{x},2x_i + l_i - \overline{x}) \times (0,l_i/2))$ and hence by a diagonal argument there exists a sequence $(\varepsilon_{\delta})_{\delta}$ such that $H^{\delta}_{\varepsilon_{\delta}} \to H$ in $L^2((\overline{x},2x_i + l_i - \overline{x}) \times (0,l_i/2))$ as $\delta \to 0$.
Then by a Fubini argument it holds (up to a subsequence) for $\mathcal{L}^1$-almost all $x \in A_1$ and $y \in A_2$ that 
\begin{align*}
\lim_{\delta \to 0} \int_{\{x, 2x_i + l_i - x\} \times (0,l_i/2) } |H^{\delta}_{\varepsilon_{\delta}}-W|^2 \dd \calH^1 &= \int_{\{x, 2x_i + l_i - x\} \times (0,l_i/2)} |H-W|^2 \dd \calH^1 \\ &\leq \frac{16}{l_i} \int_{(\overline{x} ,2 x_i + l_i - \overline{x}) \times (0,l_i/2)} |H-W|^2 \dd \calL^2.
\end{align*}
and 
\begin{align*}
\lim_{\varepsilon \to 0} \int_{ (\overline{x} ,2 x_i + l_i - \overline{x})  \times \{y\} } |H_{\varepsilon_{\delta}}^{\delta}-W|^2 \dd \calH^1 &= \int_{ (\overline{x} ,2 x_i + l_i - \overline{x})  \times \{y\} } |H-W|^2 \dd \calH^1 \\ &\leq \frac{8}{l_i} \int_{(\overline{x} ,2 x_i + l_i - \overline{x}) \times (0,l_i/2)} |H-W|^2 \dd \calL^2.
\end{align*}
From now on, fix such a pair $x \in A_1$ and $y \in A_2$.
Then, we compute using Stokes' theorem (note that $H^{\delta}_{\varepsilon_{\delta}}$ is smooth and satisfies $\left(H^{\delta}_{\varepsilon_{\delta}} -W\right)_1(x,0) \cdot (1,0) = \e0$) and H\"older's inequality (c.f.~Figure~\ref{fig: stokes})
\begin{align*}
&\left\vert \int_{ (x,2x_i + l_i -\overline{x} ) \times (0,y)} \curl \left(H^{\delta}_{\varepsilon_{\delta}}-W\right)_1 \dd \calL^2 - (2x_i + l_i - 2 x) \e0\right\vert \nonumber \\
\leq &  \int_{\{x, 2x_i + l_i - x\} \times (0,l_i/2)} \vert H^{\delta}_{\varepsilon_{\delta}}-W\vert \dd \calH^1 + \int_{ (x, 2x_i + l_i - x) \times \{y\} } \vert H^{\delta}_{\varepsilon_{\delta}}-W\vert \dd \calH^1 \nonumber \\
\leq  &\sqrt{l_i} \left(\int_{\{x, 2x_i + l_i - x\} \times (0,l_i/2)} \vert H^{\delta}_{\varepsilon_{\delta}}-W\vert^2 \dd \calH^1\right)^{1/2} \\ & \qquad + \sqrt{2x_i+l_i-2x} \left( \int_{ (x, 2x_i + l_i - x) \times \{y\} } \vert H^{\delta}_{\varepsilon_{\delta}}-W\vert^2 \dd \calH^1 \right)^{1/2} 
\end{align*}
As $\curl \left(H^{\delta}_{\varepsilon_{\delta}}\right)_1 \to \curl H$ in $L^2_{\text{loc}}((x_i,x_i+l_i) \times (-\infty,l_i);\R^2)$ and $\curl W = 0$, we obtain as $\delta \to 0$ from the above estimate, the fact that $0 < (2x_i - l_i - 2x) \leq l_i$, and the choice of $x$ and $y$  that
\begin{align*}
    &\left\vert \int_{ (x,2x_i + l_i -x ) \times (0,y)} \curl H_1 \dd \calL^2 - (2x_i + l_i - 2 x) \e0\right\vert \\
    \leq & (4+\sqrt{8}) \left( \int_{(\overline{x} ,2 x_i + l_i - \overline{x}) \times (0,l_i/2)} \vert H-W\vert^2 \dd \calL^2 \right)^{1/2}.
\end{align*}
Eventually, note that $(2x_i + l_i - 2 x) \geq l_i/2$. Therefore we may estimate
\[
    \left\vert \int_{ (x,2x_i + l_i - x ) \times (0,y)} \curl \left(H^{\delta}_{\varepsilon_{\delta}}-W\right)_1 \dd \calL^2 - (2x_i + l_i - 2 x) \e0\right\vert \geq \frac{\e0 l_i}{4}.
\]
Thus, we obtain
\[
    \int_{(\overline{x},2x_i+l_i-\overline{x}) \times (0,l_i/2)} \vert H-W\vert^2 \dd \calL^2 \geq \frac{\e0^2l_i^2}{768},
\]
where we used that $16 \cdot (4+\sqrt{8})^2 \leq 768$.
\end{proof}

\subsection{Proof of the lower bound in Theorem \ref{thm: main_intro}}
In this section we prove the lower bound in Theorem \ref{thm: main_intro}. 
\begin{prop}
There is a constant $c >0$ and $\alpha \geq 64^4$ with the following property: For all 
$\gamma, \e0,b,\d>0$ and $r_0\in (0,1]$ with  $b/\e0 \geq \alpha r_0$ it holds
\begin{eqnarray*}
   c \, s(\gamma, e_0,b,d,r_0) 
  \leq \inf_{\adm} \calF(h,H,\sigma) 
\end{eqnarray*}
where $s(\gamma, e_0,b,d,r_0) =  \gamma(1 + \d) + \min\left\{\gamma^{2/3}\e0^{2/3} \d^{2/3}, \left[\gamma \e0b\d\left(1+\log\left(\frac{b}{\e0r_0}\right)\right)\right]^{1/2}\right\}$.    
\end{prop}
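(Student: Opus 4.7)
The plan is to reduce the lower bound to a local analysis on well-chosen subrectangles of $\Omega_h$, turning the heuristics of Section \ref{heuristics} into rigorous estimates by means of the tools assembled in Section \ref{sec:lowerbound}. First I would dispose of the additive term $\gamma(1+\d)$, which follows immediately from the isoperimetric bound \eqref{eq:surface} and is independent of $\sigma$ and $H$. It thus suffices to prove
$$\min\bigl\{\gamma^{2/3}\e0^{2/3}\d^{2/3},\,[\gamma\e0 b\d(1+\log(b/(\e0 r_0)))]^{1/2}\bigr\} \lesssim \calF(h,H,\sigma),$$
and I may assume that $\calF(h,H,\sigma)$ is itself smaller than a fixed multiple of this quantity, for otherwise there is nothing to show.

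Next, I would fix a global length scale $\ell\in(0,1]$ to be optimized at the end in analogy with \eqref{eq:choiceLub}, and partition $(0,1)$ into disjoint intervals $I_i=(x_i,x_i+\ell_i)$ with $\ell_i$ comparable to $\ell$ and local volumes $d_i:=\int_{I_i} h\,d\calL^1$ satisfying $\sum_i d_i=\d$. On each $I_i$ I distinguish two alternatives: if $\sup_{I_i} h < \ell_i/2$, then Lemma \ref{isoperineq2} applied to a well-chosen subcollection of such intervals together with the local masses $d_i$ forces a surface-energy contribution of order $\gamma d_i/\ell_i$; otherwise the profile is large enough that I can fit a square $R_i := (\tilde{x}_i, \tilde{x}_i + \ell_i/2) \times (0, \ell_i/2) \subset \Omega_h$ on which to estimate the elastic energy.

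On each admissible square $R_i$, I split according to the local dislocation count $N_i := \#\{p_j \in R_i\}$. If $N_i \leq \e0 \ell_i/(4b)$, the hypothesis of Lemma \ref{eenergybetween2} is met since $|\curl H_1|(R_i) \leq b N_i \leq \e0 \ell_i/4$, and combining that lemma with Korn's inequality from Theorem \ref{thm: korn} yields $\int_{R_i} W(H)\,d\calL^2 \gtrsim \e0^2 \ell_i^2$. If instead $N_i > \e0 \ell_i/(4b)$, I apply the ball-construction-based Lemma \ref{lemma: log} inside $R_i$ --- this is where the hypothesis $b/\e0 \geq 64^4 r_0$ enters --- to obtain $\int_{R_i} W(H)\,d\calL^2 \gtrsim b^2 N_i \log(b/(\e0 r_0)) \gtrsim \e0 b \ell_i \log(b/(\e0 r_0))$. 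Adding the surface and nucleation contributions, each interval's share of the total energy is at least $\gamma\ell_i + \gamma d_i/\ell_i + \min\{\e0^2 \ell_i^2,\, \e0 b \ell_i (1+\log(b/(\e0 r_0)))\}$. Summing over $i$ and optimizing in $\ell$ exactly as in the proof of Proposition \ref{prop:ub}, and using subadditivity of $t\mapsto t^{2/3}$ and $t\mapsto t^{1/2}$ to pass from the $d_i$-sum to a bound depending on $\d=\sum_i d_i$, would produce the claimed lower bound.

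The main obstacle is the geometric compatibility between the local length scale and the hypotheses of Lemma \ref{lemma: log}: that lemma requires both that the balls $B_{b/(32\e0)}(p_n)$ fit inside $R_i$ and that the total dislocation count there be at most $|\log(b/(\e0 r_0))|^2$, so $\ell_i$ can neither be too small (for the balls to fit) nor too large (for the count to stay admissible). Reconciling these constraints with the isoperimetric dichotomy above, and with the requirement $N_i\gtrsim\e0\ell_i/b$ needed for the logarithmic bound to be effective, forces exactly the ``slightly different local length scales'' mentioned in Section \ref{heuristics}; orchestrating this refined partition, so that the $\ell_i$-dependent contributions still sum subadditively against $\d$, is the technical heart of the proof.
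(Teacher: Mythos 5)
Your toolbox is the right one (Korn with curl penalty, Lemma \ref{eenergybetween2}, the ball-construction Lemma \ref{lemma: log}, the isoperimetric Lemma \ref{isoperineq2}, subadditivity), but the organizing device you propose --- a fixed global partition scale $\ell$, optimized at the end --- is precisely what cannot work here, and the issues you defer as ``the technical heart'' are the actual content of the proof, not a technicality. Concretely: (i) your dichotomy fails as stated, since $\sup_{I_i}h\geq \ell_i/2$ does not allow you to fit a square $(\tilde x_i,\tilde x_i+\ell_i/2)\times(0,\ell_i/2)$ inside $\Omega_h$ (the profile may spike at one point of $I_i$ and be tiny elsewhere); (ii) the asserted per-interval bound $\gamma\ell_i+\gamma d_i/\ell_i+\min\{\e0^2\ell_i^2,\e0 b\ell_i(1+\log(b/(\e0 r_0)))\}$ is false for a tall island spanning many intervals, where the surface integral localized to $I_i$ is only of order $\gamma\ell_i$, not $\gamma d_i/\ell_i$; the surface contribution is intrinsically global (Lemma \ref{isoperineq2} gives $\gamma\d_J/L_J$ for the whole collection), so the AM--GM step must be performed a posteriori on the aggregated quantities $\d_{J_1},L_{J_1}$ rather than through a preassigned $\ell$; (iii) with a fixed $\ell$ there is no reason the number of dislocations relevant to $R_i$ is below $\log(b/(\e0 r_0))^2$, so Lemma \ref{lemma: log} may simply not apply, nor, when $\ell\ll b/\e0$, do the balls $B_{b/(32\e0)}(p)$ fit into the available region.

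The paper resolves exactly these points by choosing the local scales adaptively, $l_i=\min\{\ell_h,\ell_d\}$, where $\ell_h$ is the largest square fitting under the graph at $x_i$ and $\ell_d$ is the stopping scale at which the dislocation count reaches $\log(b/(\e0 r_0))^2$; when the count cap is the binding constraint and $l_i\leq (b/\e0)\log(b/(\e0 r_0))$, the estimate comes from the nucleation energy alone, $N_i\geq b^2\log(b/(\e0 r_0))^2\geq b\e0 l_i\log(b/(\e0 r_0))$, and when $l_i\leq b/\e0$ with a dislocation nearby one uses $N_{i-1}+N_i+N_{i+1}\geq b^2\geq(\e0\gamma d_i)^{2/3}$ --- your outline has no mechanism at all in these regimes, since there both elastic estimates are unavailable. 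Two further quantitative defects: the few/many-dislocations dichotomy must be formulated via the flux of $\curl H_1$ through cross-sections (the count $N_i$ inside $R_i$ does not control the mollified curl mass, as cores centered just outside $R_i$ contribute, and the flux is what enters Stokes in Lemma \ref{eenergybetween2}); and your threshold $\e0\ell_i/4$ is too large, because after Theorem \ref{thm: korn} the penalty $|\curl H|^2\sim\e0^2\ell_i^2/16$ overwhelms the gain $\e0^2\ell_i^2/(768\,c_{\textup{Korn}})$ from Lemma \ref{eenergybetween2}; the threshold must be of order $\e0 l_i/\sqrt{c_{\textup{Korn}}}$ as in the paper. Finally, disconnected $\Omega_h$ needs a separate componentwise step, which your sketch omits.
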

\begin{proof}
Let $(h,H,\sigma)\in \adm$. By \eqref{eq:Wgrowth} we will for simplicity assume that $W(H) = |\Hsym|^2$. \\

{\it Step 1: Estimate for connected $\Omega_h$. }\\
First, we assume that $\Omega_h$ is connected. 
For simplicity, set $\supp(h)=:[0,L]$. 
We will use the idea from the proof of the lower bound of \cite[Lemma 3.9]{BGZ:2015} to define local length scales. 
Note that here this choice is more involved due to the possibility of dislocations.

Fix $x_1\in [0,L]$ be such that $h(x_1)>0$.
Set 
\begin{equation}\label{def: lh}
    \ell_h \coloneqq \sup \{ l\in (0,1-x_1) \mid [x_1,x_1+l)\times (0,l)\subset \Omega_h\}
\end{equation} and 
\begin{equation}\label{def: ld}
    \ell_d \coloneqq \sup \{ l\in (0,1-x_1) \mid \# \left( \supp \sigma \cap [x_1,x_1+l)\times \R \right) \leq \left| \log \left( \frac{b}{\e0 r_0} \right) \right|^2\}.
\end{equation} 
Then define $l_1 = \min\{ \ell_h, \ell_d \}$.
Next, set $x_2\coloneqq x_1+l_1$ and repeat this process to iteratively define $(x_i)_{i=1}^{\infty}$ and $(l_i)_{i=1}^{\infty}$.
Moreover, define analogously
\begin{eqnarray*}
   l_0 \coloneqq \min\Big\{ &&\sup \left\{ l\in (0,x_1) \mid [x_1-l,x_1)\times (0,l)\subset \Omega_h\right\}, \\
   &&\sup \big\{ l\in (0,x_1) \mid \#  \left( \supp \sigma \cap (x_1-l,x_1)\times \R \right) \leq \left| \log\left( \frac{b}{\e0 r_0} \right) \right|^2\big\} \quad\Big\}, 
\end{eqnarray*}
and set $x_0 \coloneqq x_1 - l_0$.
Again, iterate this process to obtain the sequences $(x_i)_{i=0}^{-\infty}$ and $(l_i)_{i=0}^{-\infty}$.
Note that $\bigcup_{i=1}^{\infty} (x_{-i},x_i) = (0,L)$ since $h$ is continuous and $h(x) > 0$ for all $x \in (0,L)$ by the assumption that $\Omega_h$ is connected.
Next, define 
\[
\d_i\coloneqq \int_{[x_i,x_{i+1}]} h \dd \calL^1, \; E_i\coloneqq \int_{([x_i,x_{i+1}]\times \R_{>0})\cap \Omega_h} \vert H_{\up{sym}}\vert^2\dd \calL^2,  \text{ and }
 S_{i}\coloneqq \gamma \int_{[x_{i},x_{i+1}]} \sqrt{1+\vert h'\vert^2} \dd \calL^1.
\]
Additionally, define $N_i \coloneqq b^2 \# (\supp(\sigma) \cap [x_i,x_{i+1}] \times \R_{>0})$.
Then $2 \mathcal{F}(h,H,\sigma) \geq \sum_i S_i + E_i + N_i$.

We will now estimate the energy associated to $[x_i,x_{i+1}]\times \R_{>0} \cap \Omega_h$. For simplicity, we will assume $i \geq 1$. \\

\textit{Case 1: $l_i = \ell_d$ (in the sense that $l_i$ is determined through the analog of \eqref{def: ld}). } We will show that there exists a universal $c>0$ such that
\begin{equation}\label{eq: estimate case1}
E_i + N_i \geq c b \e0 l_i \log\left( \frac{b}{\e0 r_0} \right).
\end{equation}
For this, we distinguish two cases depending on the length of $l_i$. \\

\textit{Case 1a: $l_i \leq \frac{b}{\e0} \log(b/(\e0 r_0))$.} By the definition of $\ell_d$, we may estimate
\[
N_i = b^2 \# \left( \supp(\sigma) \cap [x_i,x_{i+1}] \times \R_{>0} \right) \geq b^2 \log\left( \frac{b}{\e0 r_0} \right)^2 \geq b \e0 l_i \log\left( \frac{b}{\e0 r_0} \right),
\]
which shows \eqref{eq: estimate case1} in this case. \\

\textit{Case 1b: $l_i \geq \frac{b}{\e0} \log(b/(\e0 r_0))$.} \\
\textit{Case 1b(i):} Let us assume that there exists $\overline{x}\in (x_i, x_i+l_i/8)$ such that
\[
    \int_{(\overline{x},2x_i+l_i - \overline{x}) \times (0,l_i/2)} \curl H_1 \dd \calL^2 <\sqrt{\frac{1}{2\cdot 768 \cdot c_{\textup{Korn}}}} \e0 l_i,
\]
where $c_{\textup{Korn}} > 1$ is Korn's constant for rectangles with side ration between $1/4$ and $4$ (c.f.~Remark \ref{rem:Korn}).
By the generalized Korn's inequality, Theorem \ref{thm: korn}, we obtain $W \in Skew(2)$ satisfying
\begin{align}
E_i &\geq \int_{(\overline{x},2x_i+l_i - \overline{x}) \times (0,l_i/2)} \vert H_{\textup{sym}}\vert^2 \dd \calL^2  \nonumber \\
    &\geq \frac{1}{c_{\textup{Korn}}} \int_{(\overline{x},2x_i+l_i - \overline{x}) \times (0,l_i/2)} \vert H-W\vert^2 \dd \calL^2 - \vert \curl H \vert ((\overline{x},2x_i+l_i - \overline{x}) \times (0,l_i/2))^2 \nonumber \\
    &\geq \frac{1}{c_{\textup{Korn}}} \int_{(\overline{x},2x_i+l_i - \overline{x}) \times (0,l_i/2)} \vert H-W\vert^2 \dd \calL^2 - \frac{\e0^2 l_i^2}{2\cdot 768\cdot c_{\textup{Korn}}}, \label{eq: 1b(i)1}
\end{align}
where we used that the specific form of $\curl H$ yields $\vert \curl H \vert ((\overline{x},2x_i+l_i - \overline{x}) \times (0,l_i/2)) = \int_{(\overline{x},2x_i+l_i - \overline{x}) \times (0,l_i/2)} \curl H_1 \dd \calL^2$.
Since $(2\cdot 768 \cdot c_{\textup{Korn}})^{-1/2} \leq 1/4$ we may now invoke Lemma \ref{eenergybetween2} so that
\begin{align} \label{eq: 1b(i)2}
    \frac1{c_{\textup{Korn}}} \int_{(\overline{x},2x_i+l_i - \overline{x}) \times (0,l_i/2)} \vert H-W\vert^2 \dd \calL^2 \geq \frac{\e0^2 l_i^2}{768 \cdot c_{\textup{Korn}}}.
\end{align}
Combining \eqref{eq: 1b(i)1}, \eqref{eq: 1b(i)2} and $l_i \geq \frac{b}{\e0} \log(b / (\e0 r_0))$ we find
\[
E_i \geq \frac{\e0^2 l_i^2}{2 \cdot 768 \cdot c_{\textup{Korn}}} \geq \frac{1}{768 \cdot c_{\textup{Korn}}} b \e0 l_i \log \left( \frac{b}{\e0 r_0} \right),
\]
which implies \eqref{eq: estimate case1} in this case. \\

\textit{Case 1b(ii):} Let us assume that for all $x\in (x_i, x_i+l_i/8)$ it holds
\begin{align}\label{ass 1biinew}
    \int_{(x,2x_i+l_i - x) \times (0,l_i/2)} \curl H_1 \dd \calL^2 \geq \sqrt{\frac{1}{2\cdot 768 \cdot c_{\textup{Korn}}}} \e0 l_i.
\end{align}
In particular, (\ref{ass 1biinew}) holds true for $\overline{x}\coloneqq x_i + l_i/16$. 
Next, set $A\coloneqq \supp(\sigma) \cap (\overline{x}-l_i/32, 2x_i+l_i - \overline{x} + l_i/32) \times (0, 17l_i/32)$. 
Since $r_0 < b/(32\e0) < l_i/32$ it follows that
\[
    b\cdot \# A\geq \sqrt{\frac{1}{2\cdot 768 \cdot c_{\textup{Korn}}}} \e0 l_i.
\]
Moreover, since $b/\e0 < l_i$ it follows for every $p \in A$ that $B_{b/ (32 \e0)}(p) \subseteq (x_i,x_{i+1}) \times (-l_i,l_i)$.
For a sketch see Figure~\ref{fig: ball construction applied}.
In order to apply Lemma \ref{lemma: log} note that by definition of $\ell_d$ it holds
\[
    \# \left( \supp(\sigma) \cap (x_i, x_i+l_i)\times (0,l_i)\right)\leq \log\left(\frac{b}{\e0r_0}\right)^2
\]
holds true.
Additionally, extend $H$ by $\begin{pmatrix} \e0 && 0 \\ 0 && 0\end{pmatrix}$ to $\supp(h) \times (-\infty,0]$.
Then it follows from Lemma \ref{lemma: log} that
\[
    \int_{\bigcup_{p \in A} B_{b/(32\e0)}(p)} \vert H_{\textup{sym}} \vert^2 \dd \calL^2 \geq cb^2 \log\left(\frac{b}{\e0r_0}\right) \#A.
\]
Consequently, we conclude
\begin{align*}
    E_i &\geq \int_{\bigcup_{p \in A} B_{b/(32\e0)}(p)} \vert H_{\textup{sym}} \vert^2 \dd \calL^2 - (\#A) \e0^2 \pi \left( \frac{b}{\e0} \right)^2\\
    &\geq cb^2 \log\left(\frac{b}{\e0r_0}\right) (\#A)- (\#A) \pi b^2\\
    &\geq (\#A)b^2 \left( c\log\left(\frac{b}{\e0r_0}\right) - \pi\right)\\
    &\geq \frac{c}{2}\sqrt{\frac{1}{2\cdot768\cdot c_{\textup{Korn}}}} \e0 l_i b \log\left(\frac{b}{\e0r_0}\right), 
\end{align*}
where we assume that $\alpha > 0$ is such that $\left( c\log\left(\frac{b}{\e0r_0}\right) - \pi\right) \geq c_2 \log\left( \frac{b}{\e0 r_0} \right)$.
This shows (\ref{eq: estimate case1}) and finishes the case $l_i = \ell_d$. \\
\begin{figure}[h!]
    \includegraphics[width=\textwidth]{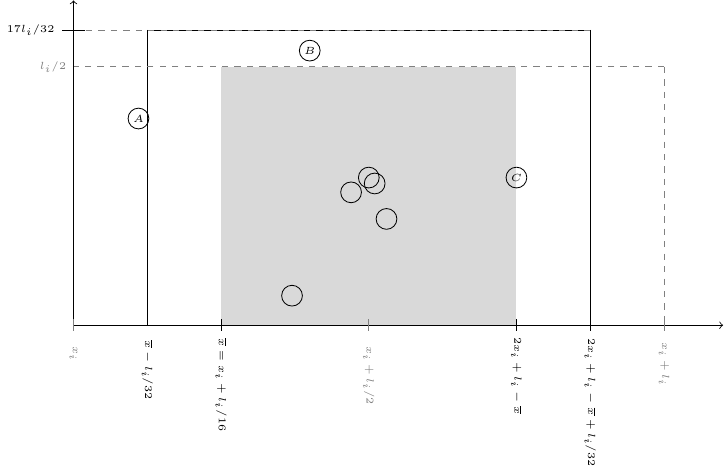}
    \caption{sketch for \textit{Case 1b(ii)}: Relatively much curl is found in the grey area. An estimate of the elastic energy is shown via a ball construction inside the black rectangle which represents the enlarged grey domain. By enlarging the grey domain three situations may appear: $(A)$ there is a new dislocation overlapping with the enlarged domain, $(B)$ a new dislocation is fully enclosed in the enlarged domain, $(C)$ a dislocation which was overlapping with the grey domain is now fully enclosed. The radii of the circles are $r_0$. Note that the sketch is not to scale. } \label{fig: ball construction applied}
\end{figure}

\textit{Case 2: $l_i = \ell_h$ (in the sense that $l_i$ is determined through the analog of \eqref{def: lh}).} In this case we will show that
\begin{equation}\label{eq: case2 elastic2}
 N_{i+1} + N_i + N_{i-1}  + E_i + S_i \geq c \left(\gamma  \e0 d_i\right)^{2/3}
\end{equation}
or
\begin{equation}\label{eq: case 2 log2}
    E_i \geq c b \e0 l_i \log\left( \frac{b}{\e0 r_0} \right). 
\end{equation}
We distinguish three cases depending on the length of $l_i$. \\

\textit{Case 2a: $l_i <\e0^{-2/3}\left(\gamma \d_i\right)^{1/3}$.} In this case, we can follow the argument from \cite{BGZ:2015}, which uses only surface energy. Note that by definition of $\ell_h$ it holds that $\min_{x_i \leq x \leq x_i+l_i} h(x) = \ell_h = l_i$. In addition, it holds $\max_{x_i \leq x \leq x_i + l_i} h(x) \geq \frac{\d_i}{l_i}$. Similarly to the proof of Lemma \ref{isoperineq2} we estimate
\begin{align}
	2 S_i \geq \gamma \int_{x_i}^{x_i+l_i} 1 + |h'(x)| \dd \calL^1 &\geq \gamma \left(l_i +  \max_{x_i \leq x \leq x_i + l_i} h(x) - \min_{x_i \leq x \leq x_i + l_i}h(x)\right) \nonumber \\ 
 &\geq \gamma \left(l_i + \frac{\d_i}{l_i} -l_i\right) = \frac{\gamma \d_i}{l_i} \geq \left(\gamma \e0 \d_i\right)^{2/3}, \nonumber
\end{align}
which shows \eqref{eq: case2 elastic2}.\\

\textit{Case 2b: $l_i  \geq \e0^{-2/3}\left(\gamma \d_i\right)^{1/3}$ and $l_i \geq b / \e0$.}
In this case, we will argue similarly to Case 1b. \\

\textit{Case 2b(i):} Let us assume that there exists $\overline{x}\in (x_i, x_i+l_i/8)$ such that
\[
    \int_{(\overline{x},2x_i+l_i - \overline{x}) \times (0,l_i/2)} \curl H_1 \dd \calL^2 <\sqrt{\frac{1}{2\cdot 768 \cdot c_{\textup{Korn}}}} \e0 l_i.
\]
Then we argue exactly as in case 1b(i) to find 
\begin{align*}
    E_i 
    \geq \frac{\e0^2 l_i^2}{2 \cdot 768 \cdot c_{\textup{Korn}}} &\geq \frac{1}{2 \cdot 768 \cdot c_{\textup{Korn}}}(\gamma \e0d_i)^{2/3}, 
\end{align*}
which shows \eqref{eq: case2 elastic2}. \\

\textit{Case 2b(ii):} Let us assume that for all $x\in (x_i, x_i+l_i/8)$ such that
\begin{align}\label{ass 1bii}
    \int_{(x,2x_i+l_i - x) \times (0,l_i/2)} \curl H_1 \dd \calL^2 \geq \sqrt{\frac{1}{2\cdot 768 \cdot c_{\textup{Korn}}}} \e0 l_i.
\end{align}
In particular, (\ref{ass 1bii}) holds true for $\overline{x}\coloneqq x_i + l_i/16$. 
Now, note that $l_i = \ell_h \leq \ell_d$ it follows that
\[
    \# \left( \supp(\sigma) \cap (x_i, x_i+l_i)\times (0,l_i)\right)\leq \log\left(\frac{b}{\e0r_0}\right)^2
\]
Then we argue exactly as in case 1b(ii) to find 
\[
E_i \geq \frac{c}{2} \sqrt{\frac{1}{2\cdot 768 c_{\textup{Korn}}}} \e0  l_i b \log \left(\frac{b}{\e0r_0}\right),
\]
which shows \eqref{eq: case 2 log2}. \\

\textit{Case 2c: $l_i  \geq \e0^{-2/3}\left(\gamma \d_i\right)^{1/3}$ and $l_i \leq b / \e0$.}
If $\int_{((x_i,x_i+l_i)\times \R_{>0}) \cap \Omega_h} \curl H\dd \calL^2 = 0$ we obtain by Korn's inequality and Lemma \ref{eenergybetween2} 
\begin{align*} 
	E_i \geq \frac{1}{c_{\text{Korn}}} \min_{W \in Skew(2)} \int_{(x_i,x_i + l_i) \times (0,l_i)} \vert H - W\vert^2 \dd \calL^2 \geq \frac{1}{768 \cdot c_{\text{Korn}}} \e0^2l_i^2 \geq \frac{1}{768\cdot c_{\text{Korn}}} \left(\gamma \e0 \d_i\right)^{2/3},
\end{align*}
which shows \eqref{eq: case2 elastic2} in this case.
If $\int_{((x_i,x_i+l_i)\times \R_{>0}) \cap \Omega_h} \curl H \dd \calL^2 \neq 0$, then there exists $p \in \supp(\sigma)$ such that $B_{r_0}(p) \cap ((x_i,x_i+l_i)\times \R_{>0}) \cap \Omega_h \neq \emptyset$ which implies that  $x_i - r_0 < p_1 < x_i + l_i + r_0$. 
If $x_i \leq p_1 \leq x_i+l_i$ then $N_i \geq b^2$. Let us now assume that $p_1 > x_i + l_i$. If $l_{i+1} = \ell_h$ then it follows that $l_{i+1} \geq p_1 - x_i - l_i$ since $h(x) \geq r_0$ for all $x_i + l_i \leq x \leq p_1$. It follows that $N_{i+1} \geq b^2$. A similar argument shows that if $l_{i-1} = \ell_h$ and $z_1 \leq x_i$ then $l_{i-1} \geq x_i - z$. Consequently, $N_{i-1} \geq b^2$. If $l_{i+1} = \ell_d$ or $l_{i-1} = \ell_d$ it follows by definition of $\ell_d$ that $N_{i-1} + N_{i+1} \geq b^2$. In summary, we obtain using $b \geq l_i \e0 \geq (\e0 \gamma d_i)^{1/3}$
\[
    N_{i-1} + N_i + N_{i+1} \geq b^2 \geq \left(\e0\gamma  d_i\right)^{2/3}.
\]
This shows \eqref{eq: case2 elastic2} which concludes Case $2$ where $l_i = \ell_h$. \\

Now, let $J_1\subset \N$ be the indices such that \eqref{eq: estimate case1} or (\ref{eq: case 2 log2}) holds, i.e. Case $1, 2b(ii)$, and $J_2\subset \N$ the indices such that \eqref{eq: case2 elastic2}, i.e. Cases $2a, 2b(i)$, and $2c$, holds.
Define $\d_{J_1}\coloneqq \sum_{i\in J_1} \d_i$, $\d_{J_2}\coloneqq \sum_{i\in J_2} \d_i$ as well as $L_{J_1}\coloneqq \sum_{i\in J_1} l_i$.
Overall we estimate using Lemma \ref{isoperineq2}, the subadditivity of the function $t \to t^{2/3}$, and minimizing in $L_{J_1}$
\begin{align*}
	6 \calF(h,H,\sigma) &\geq \gamma \int_0^1 \sqrt{1+\vert h'\vert^2}\dd \calL^1 + \sum_{i \in \Z} S_i + E_i + N_{i-1} + N_i + N_{i+1} \\
		&\geq c \left[\gamma \frac{\d_{J_1}}{L_{J_1}}  + \sum_{i\in J_1} l_i\e0b \log(b/(\e0r_0)) + \sum_{i\in J_2} \left(\e0\gamma \d_i\right)^{2/3}\right] \\
		&=c \left[ \gamma \frac{\d_{J_1}}{L_{J_1}} + L_{J_1}\e0b \log(b/(\e0r_0)) +\sum_{i\in J_2} \left(\e0\gamma \d_i\right)^{2/3}\right] \\
		&\geq c \left[ (\log(b/(\e0r_0)) \e0b\gamma \d_{J_1})^{1/2} + \left(\e0\gamma \d_{J_2}\right)^{2/3}\right] , \\
        &\geq c \min\{(\log(b/(\e0r_0)) \e0b\gamma \d_{J_1})^{1/2}, \left(\e0\gamma \d_{J_2}\right)^{2/3} \} \\
        &\geq \frac{c}{2^{1/2}} \min\{(\log(b/(\e0r_0)) \e0b\gamma \d)^{1/2}, \left(\e0\gamma \d\right)^{2/3} \} .
\end{align*} 
For the last inequality we used that $\d_{J_1} + \d_{J_2} = d$ and therefore $\d_{J_1} \geq \d/2$ or $\d_{J_2} \geq \d/2$. \\

\textit{Step 2: Estimate for arbitrary $\Omega_h$.}
By the continuity of $h$ the set $\Omega_h$ has at most countably many connected components $\Omega_j$ with volume $\d_j$. 
By applying Step 1 to every set $\Omega_j$ we find 
\begin{align*}
6 \calF(h,H,\sigma) &\geq \sum_j \frac{c}{2^{1/2}} \min\{(\log(b/(\e0r_0)) \e0b\gamma \d_j)^{1/2}, \left(\e0\gamma \d_j\right)^{2/3} \}.
\end{align*}
We split the sum in two parts corresponding to index sets  $I_1:=\{j: [\log(b/(\e0r_0)) \e0b\gamma \d_j]^{1/2}< \left(\e0\gamma \d_j\right)^{2/3}\}$ and $I_2:=\N\setminus I_1$, and note that $\sum_{j\in I_1} \d_j \geq  \d/2$ or $\sum_{j\in I_2} \d_j \geq  \d/2$. Therefore, 
using subadditivity, we obtain

\begin{align*}
6\calF(h,H,\sigma) 
&\geq \frac{c}{2} \min\{(\log(b/(\e0r_0)) \e0b\gamma \d)^{1/2}, \left(\e0\gamma \d\right)^{2/3} \}.
\end{align*}

\textit{Step 3: Conclusion.} Note that by Remark \ref{eq:surface}, we may estimate
\begin{align*}
	\calF(h,H,\sigma) \geq \gamma \left(\frac12 + \frac{\d}2\right).
\end{align*}
Combining this with Step 2 yields
\begin{align*}
\calF(h,H,\sigma) + 6 \calF(h,H,\sigma) &\geq \gamma \left(\frac12 + \frac \d2\right) + \frac{c}{2} \min\{(\log(b/(\e0r_0)) \e0b\gamma \d)^{1/2}, \left(\e0\gamma \d\right)^{2/3} \} \\
&\geq \min\left\{\frac12,\frac{c}{2}  \right\} s(\gamma, \e0,b,d,r_0). 
\end{align*}

\end{proof}
\section*{Acknowledgements}
Support of the {\it Deutsche Forschungsgemeinschaft}  under Germany's Excellence Strategy
{\it The Berlin Mathematics Research Center MATH+ and the Berlin Mathematical School
(BMS)} (EXC-2046/1, project 390685689) and within the Research
Training Group 2433 
(project number 384950143)  is gratefully acknowledged. BZ would like to thank Peter Bella and Michael Goldman for helpful discussions.

\bibliography{ScalingLawEpitaxialDislocations}

\begin{thebibliography}{10}

\bibitem{alicandro-et-al:14}
{\sc R.~Alicandro, L.~De~Luca, A.~Garroni, and M.~Ponsiglione}, {\em
  Metastability and dynamics of discrete topological singularities in two
  dimensions: a {$\Gamma$-convergence approach}}, Archive for Rational
  Mechanics and Analysis, 214 (2014), pp.~269--330.

\bibitem{BGZ:2015}
{\sc P.~Bella, M.~Goldman, and B.~Zwicknagl}, {\em Study of island formation in
  epitaxially strained films on unbounded domains}, Archive for Rational
  Mechanics and Analysis, 218 (2015), pp.~163--217.

\bibitem{Bonac}
{\sc M.~Bonacini}, {\em Epitaxially strained elastic films: the case of
  anisotropic surface energies}, ESAIM Control Optim. Calc. Var., 19 (2013),
  pp.~167--189.

\bibitem{Bonac:2015}
{\sc M.~Bonacini}, {\em Stability of equilibrium configurations for elastic
  films in two and three dimensions}, Advances in Calculus of Variations, 8
  (2015), pp.~117--153.

\bibitem{BonCha}
{\sc E.~Bonnetier and A.~Chambolle}, {\em Computing the equilibrium
  configuration of epitaxially strained crystalline films}, SIAM J. Appl.
  Math., 62 (2002), pp.~1093--1121.

\bibitem{Boyer/Fabrie:2012}
{\sc F.~Boyer and P.~Fabrie}, {\em Mathematical Tools for the Study of the
  Incompressible Navier-Stokes Equations and Related Models}, vol.~183,
  Springer Science and Business Media, 2012.

\bibitem{CF:20}
{\sc V.~Crismale and M.~Friedrich}, {\em Equilibrium configurations for
  epitaxially strained films and material voids in three-dimensional linear
  elasticity}, Archive for Rational Mechanics and Analysis, 237 (2020),
  pp.~1041--1098.

\bibitem{Lions3}
{\sc R.~Dautray and J.-L. Lions}, {\em Mathematical Analysis and Numerical
  Methods for Science and Technology}, vol.~3, Springer, 1990.

\bibitem{DavPio:2019}
{\sc E.~Davoli and P.~Piovano}, {\em Analytical validation of the
  {{Young--Dupr{\'e}}} law for epitaxially-strained thin films}, Mathematical
  Models and Methods in Applied Sciences, 29 (2019), pp.~2183--2223.

\bibitem{DP:2020}
{\sc E.~Davoli and P.~Piovano}, {\em Derivation of a heteroepitaxial thin-film
  model}, Interfaces and Free Boundaries, 22 (2020), pp.~1--26.

\bibitem{FFLMor:2007}
{\sc I.~Fonseca, N.~Fusco, G.~Leoni, and M.~Morini}, {\em Equilibrium
  configurations of epitaxially strained crystalline films: existence and
  regularity results}, Arch. Ration. Mech. Anal., 186 (2007), pp.~477--537.

\bibitem{FFLM:2016}
{\sc I.~Fonseca, N.~Fusco, G.~Leoni, and M.~Morini}, {\em A model for
  dislocations in epitaxially strained elastic films}, Journal de
  Mathématiques Pures et Appliquées, 111 (2018), pp.~126--160.

\bibitem{FPZ:14}
{\sc I.~Fonseca, A.~Pratelli, and B.~Zwicknagl}, {\em Shapes of epitaxially
  grown quantum dots}, Archive for Rational Mechanics and Analysis, 214 (2014),
  pp.~359--401.

\bibitem{FKZ:21}
{\sc M.~Friedrich, L.~Kreutz, and K.~Zemas}, {\em Geometric rigidity in
  variable domains and derivation of linearized models for elastic materials
  with free surfaces}, arXiv preprint arXiv:2107.10808,  (2021).

\bibitem{FJM20}
{\sc N.~Fusco, V.~Julin, and M.~Morini}, {\em The surface diffusion flow with
  elasticity in three dimensions}, Arch Rational Mech Anal, 237 (2020),
  p.~1325–1382.

\bibitem{FusMor}
{\sc N.~Fusco and M.~Morini}, {\em Equilibrium configurations of epitaxially
  strained elastic films: second order minimality conditions and qualitative
  properties of solutions}, Arch. Ration. Mech. Anal., 203 (2012),
  pp.~247--327.

\bibitem{Gao-Nix:99}
{\sc H.~Gao and W.~Nix}, {\em Surface roughening of heteroepitaxial thin
  films}, Annual Review of Materials Science, 29 (1999), pp.~173--209.

\bibitem{GLP}
{\sc A.~Garroni, G.~Leoni, and M.~Ponsiglione}, {\em Gradient theory for
  plasticity via homogenization of discrete dislocations}, Journal of the
  European Mathematical Society, 12 (2010), pp.~1231--1266.

\bibitem{Ginster:19}
{\sc J.~Ginster}, {\em Plasticity as the {$\Gamma$}-limit of a two-dimensional
  dislocation energy: The critical regime without the assumption of
  well-separateness}, Archive for Rational Mechanics and Analysis, 233 (2019),
  pp.~1253--1288.

\bibitem{GZ:2013}
{\sc M.~Goldman and B.~Zwicknagl}, {\em Scaling law and reduced models for
  epitaxially strained crystalline films}, SIAM J. Math. Analysis, 46 (2014),
  pp.~1--24.

\bibitem{Haataja:2002}
{\sc M.~Haataja, J.~M{\"u}ller, A.~Rutenberg, and M.~Grant}, {\em Dislocations
  and morphological instabilities: Continuum modeling of misfitting
  heteroepitaxial films}, Physical Review B, 65 (2002), pp.~1654141--16541420.

\bibitem{Jerrard99}
{\sc R.~L. Jerrard}, {\em Lower bounds for generalized {G}inzburg--{L}andau
  functionals}, SIAM Journal on Mathematical Analysis, 30 (1999), pp.~721--746.

\bibitem{Kreutz-Piovano:21}
{\sc L.~Kreutz and P.~Piovano}, {\em Microscopic validation of a variational
  model of epitaxially strained crystalline films}, SIAM Journal on
  Mathematical Analysis, 53 (2021), pp.~453--490.

\bibitem{Matthews}
{\sc J.~Matthews and A.~Blakeslee}, {\em Defects in epitaxial multilayers: I.
  misfit dislocations}, J. Cryst. Growth, 27 (1974), pp.~118--125.

\bibitem{Nabarro}
{\sc F.~Nabarro}, {\em Theory of crystal dislocations}, Clarendon Press,
  Oxford, 1967.

\bibitem{piovano:2014}
{\sc P.~Piovano}, {\em Evolution of elastic thin films with curvature
  regularization via minimizing movements}, Calculus of Variations and Partial
  Differential Equations, 49 (2014), pp.~337--367.

\bibitem{piovano-sapio:2023}
{\sc P.~Piovano and F.~Sapio}, {\em Evolution of crystalline thin films by
  evaporation and condensation in three dimensions}.
\newblock arXiv:2306.13432, 2023.

\bibitem{Sandier98}
{\sc E.~Sandier}, {\em Lower bounds for the energy of unit vector fields and
  applications}, Journal of functional analysis, 152 (1998), pp.~379--403.

\bibitem{Tersoff-LeGoues:94}
{\sc J.~Tersoff and F.~K. LeGoues}, {\em Competing relaxation mechanisms in
  strained layers}, Phys. Rev. Lett., 72 (1994), pp.~3570--3573.

\end{thebibliography}
\bibliographystyle{siam}
\end{document}